\theoremstyle{plain}
\newtheorem{thm}{Theorem}[section]
\newtheorem{lem}[thm]{Lemma}
\newtheorem{cor}[thm]{Corollary}
\newtheorem{prop}[thm]{Proposition}
\newtheorem{ques}{Question}
\theoremstyle{definition}
\newtheorem{example}[thm]{Example}
\newcommand{\bm}[1]{\mbox{\boldmath $#1$}}
\DeclareMathOperator{\sh}{sh}
\DeclareMathOperator{\h}{ht}
\newcommand{\sch}{\mbox {\textsf {Sch}}}
\title[Involution Products]{How to write a permutation as a product of involutions (and why you might care)}
\author[T. K. Petersen]{T. Kyle Petersen$^*$}
\thanks{$^*$ Research partially supported by an NSA Young Investigator grant}
\author[B. E. Tenner]{Bridget Eileen Tenner$^\dagger$}
\thanks{$^\dagger$ Research partially supported by a DePaul University Faculty Summer Research Grant.} 
\address{Department of Mathematical Sciences, DePaul University, Chicago, IL 60614, USA}
\email{tkpeters@math.depaul.edu\\bridget@math.depaul.edu}
\begin{document}

\begin{abstract}
It is well-known that any permutation can be written as a product of two involutions. We provide an explicit formula for the number of ways to do so, depending only on the cycle type of the permutation.

In many cases, these numbers are sums of absolute values of irreducible characters of the symmetric group evaluated at the same permutation, although apart from the case where all cycles are the same size, we have no good explanation for why this should be so.  
\end{abstract}

\maketitle

\section{Introduction}

The authors were interested in finding a combinatorial model counted by the following sequence of integers: \begin{equation}\label{seq:A} 1, 4, 9, 27, 61, 185, 469, \ldots, \end{equation} And upon putting these terms into The On-Line Encyclopedia of Integer Sequences, we received a match in sequence A164342 \cite{OEIS}. That entry was a stub however, simply defined as the row sums of the following table, given in entry A164341:
\begin{equation}\label{arr:A} \begin{array}{c | c c c c c c c c c c c c c c  c}
n \backslash m & 1 & 2 & 3 & \cdots \\
\hline
1 & 1\\
2 & 2 & 2\\
3 & 3 & 2 & 4\\
4 & 4 & 3 & 6 & 4 & 10\\
5 & 5 & 4 & 6 & 6 & 6 & 8 & 26\\
6 & 6 & 5 & 8 & 12 & 8 & 6 & 20 & 12 & 12 & 20 & 76\\
7 & 7 & 6 & 10 & 12 & 10 & 8 & 12 & 18 & 16 & 12 & 20 & 30 & 24 & 52 & 232
\end{array}\end{equation}
But this was fantastic news, for our sequence could be refined in exactly the same fashion!

We had generated the terms in \eqref{seq:A} by computing the sums of the absolute values of the irreducible characters of the symmetric group $S_n$, more precisely:
$$\sum_{\lambda, \mu \vdash n} |\chi^\lambda_\mu|,$$
and the entries in array \eqref{arr:A} by fixing $\mu$ and considering
$$\sum_{\lambda \vdash n} |\chi^\lambda_\mu|.$$
On the other hand, A164341 said entry $(n,m)$ ``counts the decompositions into involutions of a permutation that has a cycle structure given by the $m$th partition of $n$." 

Could this mean there might be a way to compute irreducible characters of $S_n$ by counting involution products in the right way? Well, no. Our refinement of the sequence in \eqref{seq:A} and the array in A164341 agreed for $n\le 7$, but diverged at $n=8$.  Here is the row for $n=8$ in our table, sitting atop the row for $n=8$ in A164341.
$$\tiny \begin{array}{c | c c c ccccccccccccccccccc}
{\displaystyle \sum_{\ \lambda \vdash 8_{\ }} |\chi^\lambda_\mu| } & 8 &7 &12 &15 &20 &12 &10 &12 &\bm{20}& 24 &20& \bm{12}& 24 &18 &76 &\bm{36}& 24 &\bm{36} &78 &\bm{52} &\bm{148} &764 \\ \hline \\
\mbox{A164341} & 8 &7 &12 &15 &20 &12 &10 &12 &\bm{24}& 24 &20& \bm{16}& 24 &18 &76 &\bm{40}& 24 &\bm{40} &78 &\bm{60} &\bm{152} &764 \end{array}$$
(The differences are highlighted in boldface.) And here are the next few terms in sequence \eqref{seq:A}, with the next few terms of A164342 below.
$$\begin{array}{c | c c c c c c}
 n & 8 & 9 & 10 & 11 & 12 & 13 \\ \hline \hline \\
   \sum_{\lambda,\mu \vdash n} |\chi^\lambda_\mu| & 1428 & 4292 & 14456 & 50040 & 186525 & 724023 \\
   \\
   \mbox{A164342} &  1456 & 4368 & 14720 & 50800 & 190149 & 735451 \\ \\\hline
   \mbox{(overcount)} & 28 & 76 & 264 & 760 & 3624 & 11428
   \end{array}$$

In this note, we study the ways in which a permutation can be expressed as a product of two involutions. This puts entries A164341 and A164342 of \cite{OEIS} on solid footing, as there seem to be no references to this question in the literature. Because the motivation for this problem is the study of irreducible characters of the symmetric group, we try to explain what we can of the connection between the two topics, using ideas of White \cite{white} and Stanton and White \cite{stanton-white}. 

Section \ref{sec:prod} investigates involution products from a purely combinatorial point of view. Section \ref{sec:connections} discusses the relation to irreducible characters.

\section{Products of involutions in $S_n$}\label{sec:prod}


For a permutation $\sigma \in S_n$, we typically use cycle notation in this work.  For example, $\sigma = (1236547)$ is the map $1 \mapsto 2 \mapsto 3 \mapsto 6 \mapsto 5 \mapsto 4 \mapsto 7 \mapsto 1$, while $\tau = (135)(26)(4)(7)$ maps $1 \mapsto 3 \mapsto 5 \mapsto 1$, $2 \mapsto 6 \mapsto 2$, and fixes $4$ and $7$.
It is useful to draw our permutations as digraphs on the set $[n]:=\{1,2,\ldots,n\}$, in order to visualize the cycle structure. For example,
$$\tau = \hspace{.1in}
\begin{minipage}{1.5in}
\begin{tikzpicture}[node distance=.25cm,>=stealth',bend angle=45,auto]
\tikzstyle{state}=[circle,draw=black,minimum size=4mm]
\node[state] (a) {$1$};
\node[state] (b) [below right=of a,xshift=5mm] {$2$};
\node[state] (c) [below=of b] {$3$};
\node[state] (d) [below left=of c] {$4$};
\node[state] (e) [left=of d] {$5$};
\node[state] (f) [above left=of e] {$6$};
\node[state] (g) [above=of f] {$7$};
\path[->] (a) edge (c);
\path[->] (c) edge[bend right] (e);
\path[->] (e) edge (a);
\path[<->] (b) edge (f);
\path[->] (d) edge [loop below] (d);
\path[->] (g) edge [loop above] (g);
\end{tikzpicture}
\end{minipage}$$
The cycle structure of a permutation is encoded by a \emph{partition}. A partition $\lambda \vdash n$ is a collection of positive integers whose sum is $n$. We usually order the parts of $\lambda = (\lambda_1,\lambda_2,\ldots)$ in nonincreasing order; that is, $\lambda_1 \geq \lambda_2 \geq \cdots$ and $\sum \lambda_i = n$. An alternate notation is to let $j_i$ denote the number of parts of size $i$ in $\lambda$, and to write $\lambda = 1^{j_1} 2^{j_2} \cdots$, often suppressing any $j_i = 1$. For example, $\tau$ above has two one-cycles, one two-cycle, and one three-cycle; we encode this information with the partition $\lambda =(3,2,1,1)=1^22^13^1 = 1^223$.

The product of two permutations can be found by superimposing two digraphs of the form described above. For example, if we wish to compute the product of $\sigma$ and $\tau$ from above, we draw the edges of $\sigma$ and $\tau$ with different colors, say red and blue, respectively. Then $\sigma\tau(i) = j$ if there is a directed path $i {\color{blue} \rightarrow } \tau(i) {\color{red} \rightarrow} j$. For example,
$$\sigma\tau = \hspace{.1in}
\begin{minipage}{1.25in}
\begin{tikzpicture}[node distance=.25cm,>=stealth',bend angle=45,auto]
\tikzstyle{state}=[circle,draw=black,minimum size=4mm]
\node[state] (a) {$1$};
\node[state] (b) [below right=of a,xshift=5mm] {$2$};
\node[state] (c) [below=of b] {$3$};
\node[state] (d) [below left=of c] {$4$};
\node[state] (e) [left=of d] {$5$};
\node[state] (f) [above left=of e] {$6$};
\node[state] (g) [above=of f] {$7$};
\path[->,red] (a) edge (b);
\path[->,blue] (a) edge (c);
\path[->,red] (b) edge (c);
\path[->,blue] (c) edge[bend right] (e);
\path[->,red] (c) edge (f);
\path[->,blue] (d) edge [loop below] (d);
\path[->,red] (d) edge (g);
\path[->,blue] (e) edge (a);
\path[->,red] (e) edge (d);
\path[->,red] (f) edge (e);
\path[->,red] (g) edge (a);
\path[->,blue] (g) edge [loop above] (g);
\path[<->,blue] (b) edge (f);
\end{tikzpicture}
\end{minipage}
\hspace{.1in} = \hspace{.1in}
\begin{minipage}{1.25in}
\begin{tikzpicture}[node distance=.25cm,>=stealth',bend angle=45,auto]
\tikzstyle{state}=[circle,draw=black,minimum size=4mm]
\node[state] (a) {$1$};
\node[state] (b) [below right=of a,xshift=5mm] {$2$};
\node[state] (c) [below=of b] {$3$};
\node[state] (d) [below left=of c] {$4$};
\node[state] (e) [left=of d] {$5$};
\node[state] (f) [above left=of e] {$6$};
\node[state] (g) [above=of f] {$7$};
\path[->] (a) edge (f);
\path[->] (c) edge (d);
\path[->] (d) edge (g);
\path[->] (f) edge (c);
\path[->] (g) edge (a);
\path[<->] (b) edge (e);
\path[->,white] (d) edge [loop below] (d); 
\end{tikzpicture}
\end{minipage}
 = (16347)(25)$$

We now consider the special case of \emph{involutions}; that is, permutations whose squares are the identity. In an involution, each cycle must have size one or two, meaning that we can represent such elements graphically by \emph{partial matchings}. For example, 
$$\iota = (14)(23)(57)(6) = \hspace{.1in}
\begin{minipage}{1.5in}
\begin{tikzpicture}[node distance=.25cm,>=stealth',bend angle=45,auto]
\tikzstyle{state}=[circle,draw=black,minimum size=4mm]
\node[state] (a) {$1$};
\node[state] (b) [below right=of a,xshift=5mm] {$2$};
\node[state] (c) [below=of b] {$3$};
\node[state] (d) [below left=of c] {$4$};
\node[state] (e) [left=of d] {$5$};
\node[state] (f) [above left=of e] {$6$};
\node[state] (g) [above=of f] {$7$};
\path (a) edge (d);
\path (b) edge (c);
\path (e) edge[bend right] (g);
\path (f) edge[loop left] (f);
\end{tikzpicture}
\end{minipage}$$

The product of two involutions in $S_n$ can then be expressed as a two-colored graph on $[n]$ (with loops) for which every vertex has precisely one edge of each color, counting a loop as a single edge. We call such a graph an \emph{involution product graph}. For example, with the involution $\iota$ as above, and $\kappa = (1)(27)(35)(46)$, we have
$$\iota\kappa = \hspace{.1in}
\begin{minipage}{1.5in}
\begin{tikzpicture}[node distance=.25cm,>=stealth',bend angle=45,auto]
\tikzstyle{state}=[circle,draw=black,minimum size=4mm]
\node[state] (a) {$1$};
\node[state] (b) [below right=of a,xshift=5mm] {$2$};
\node[state] (c) [below=of b] {$3$};
\node[state] (d) [below left=of c] {$4$};
\node[state] (e) [left=of d] {$5$};
\node[state] (f) [above left=of e] {$6$};
\node[state] (g) [above=of f] {$7$};
\path[red] (a) edge (d);
\path[red] (b) edge (c);
\path[red] (e) edge[bend right] (g);
\path[red] (f) edge [loop left] (f);
\path[blue] (a) edge [loop above] (a);
\path[blue] (b) edge (g);
\path[blue] (c) edge[bend right] (e);
\path[blue] (d) edge[bend right] (f);
\end{tikzpicture}
\end{minipage}
\hspace{.1in} = \hspace{.1in}
\begin{minipage}{1.25in}
\begin{tikzpicture}[node distance=.25cm,>=stealth',bend angle=45,auto]
\tikzstyle{state}=[circle,draw=black,minimum size=4mm]
\node[state] (a) {$1$};
\node[state] (b) [below right=of a,xshift=5mm] {$2$};
\node[state] (c) [below=of b] {$3$};
\node[state] (d) [below left=of c] {$4$};
\node[state] (e) [left=of d] {$5$};
\node[state] (f) [above left=of e] {$6$};
\node[state] (g) [above=of f] {$7$};
\path[->] (a) edge (d);
\path[<->] (c) edge (g);
\path[->] (d) edge[bend right] (f);
\path[->] (f) edge (a);
\path[<->] (b) edge (e);
\path[->,white] (a) edge [loop above] (a); 
\end{tikzpicture}
\end{minipage}\hspace{.1in}.$$

We see from this example that the set of involutions is not closed under multiplication. Indeed, it is well-known that any permutation can be written as a product of involutions (see \cite[Exercise 10.1.17]{scott}), often in many different ways. Our goal is to describe, and to count, all the ways in which this can be done.

If an involution product graph defines a permutation $\rho$, we say it is an \emph{involution graph for $\rho$}. In what follows, we identify the set of pairs of involutions whose product is $\rho$ with the set of involution graphs for $\rho$. Let $N(\rho)$ denote the number of ways $\rho \in S_n$ can be written as a product of involutions. To be clear, \[ N(\rho) = |\{ (\sigma,\tau) \in S_n^2 : \sigma^2 = \tau^2 =1, \sigma\tau = \rho\}| = |\{ \mbox{involution graphs for } \rho \}|.\]

\begin{example}
$N((123)(456)) = 12$, as depicted in Figure~\ref{fig:(123)(456) decompositions}.
\end{example}

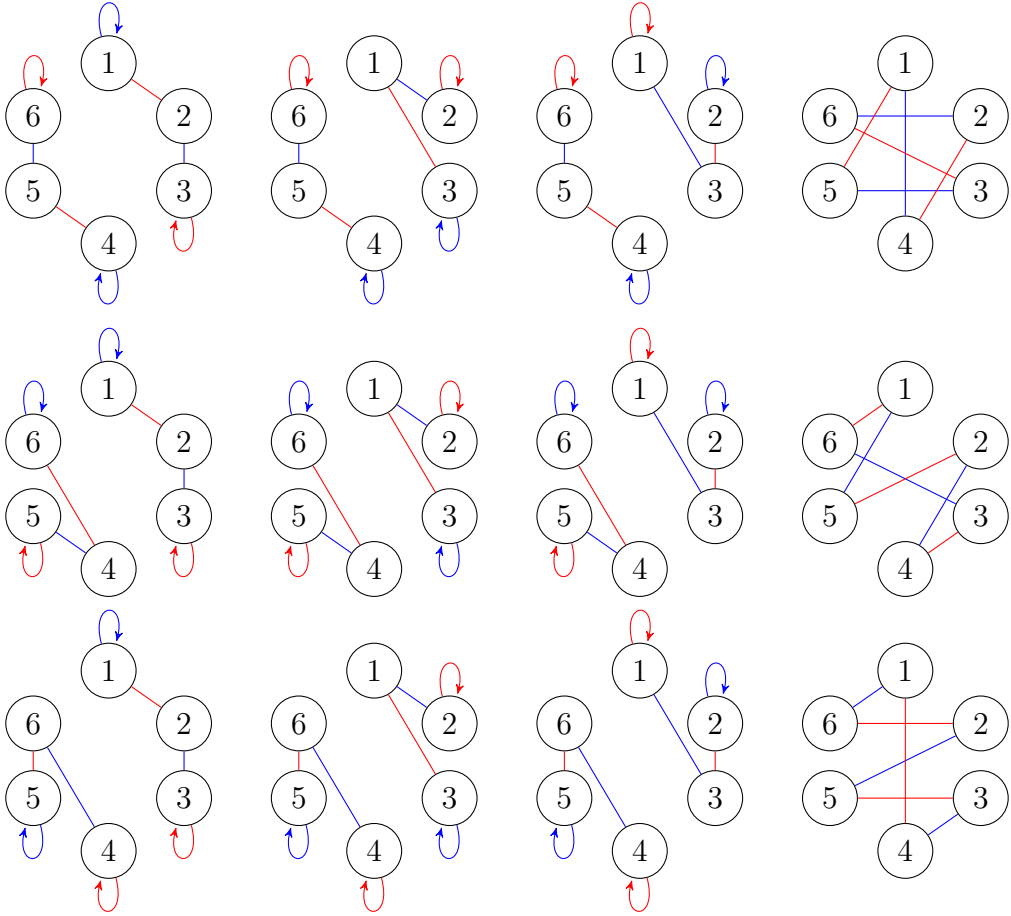
\begin{figure}[htbp]
\begin{center}
\begin{tikzpicture}[node distance=.25cm,>=stealth',bend angle=45,auto]
\tikzstyle{state}=[circle,draw=black,minimum size=4mm]
\node[state] (a) {$1$};
\node[state] (b) [below right=of a,xshift=3mm] {$2$};
\node[state] (c) [below=of b] {$3$};
\node[state] (d) [below left=of c,xshift=-3mm] {$4$};
\node[state] (e) [above left=of d,xshift=-3mm] {$5$};
\node[state] (f) [above=of e] {$6$};
\path[red] (a) edge (b);
\path[red] (d) edge (e);
\path[red] (c) edge [loop below] (c);
\path[red] (f) edge [loop above] (f);
\path[blue] (b) edge (c);
\path[blue] (e) edge (f);
\path[blue] (a) edge [loop above] (a);
\path[blue] (d) edge [loop below] (d);
\end{tikzpicture}
\hspace{.2in}
\begin{tikzpicture}[node distance=.25cm,>=stealth',bend angle=45,auto]
\tikzstyle{state}=[circle,draw=black,minimum size=4mm]
\node[state] (a) {$1$};
\node[state] (b) [below right=of a,xshift=3mm] {$2$};
\node[state] (c) [below=of b] {$3$};
\node[state] (d) [below left=of c,xshift=-3mm] {$4$};
\node[state] (e) [above left=of d,xshift=-3mm] {$5$};
\node[state] (f) [above=of e] {$6$};
\path[red] (a) edge (c);
\path[red] (d) edge (e);
\path[red] (b) edge [loop above] (b);
\path[red] (f) edge [loop above] (f);
\path[blue] (b) edge (a);
\path[blue] (e) edge (f);
\path[blue] (c) edge [loop below] (c);
\path[blue] (d) edge [loop below] (d);
\end{tikzpicture}
\hspace{.2in}
\begin{tikzpicture}[node distance=.25cm,>=stealth',bend angle=45,auto]
\tikzstyle{state}=[circle,draw=black,minimum size=4mm]
\node[state] (a) {$1$};
\node[state] (b) [below right=of a,xshift=3mm] {$2$};
\node[state] (c) [below=of b] {$3$};
\node[state] (d) [below left=of c,xshift=-3mm] {$4$};
\node[state] (e) [above left=of d,xshift=-3mm] {$5$};
\node[state] (f) [above=of e] {$6$};
\path[red] (b) edge (c);
\path[red] (d) edge (e);
\path[red] (a) edge [loop above] (a);
\path[red] (f) edge [loop above] (f);
\path[blue] (c) edge (a);
\path[blue] (e) edge (f);
\path[blue] (b) edge [loop above] (b);
\path[blue] (d) edge [loop below] (d);
\end{tikzpicture}
\hspace{.2in}
\begin{tikzpicture}[node distance=.25cm,>=stealth',bend angle=45,auto]
\tikzstyle{state}=[circle,draw=black,minimum size=4mm]
\node[state] (a) {$1$};
\node[state] (b) [below right=of a,xshift=3mm] {$2$};
\node[state] (c) [below=of b] {$3$};
\node[state] (d) [below left=of c,xshift=-3mm] {$4$};
\node[state] (e) [above left=of d,xshift=-3mm] {$5$};
\node[state] (f) [above=of e] {$6$};
\path[blue] (a) edge (d);
\path[blue] (b) edge (f);
\path[blue] (c) edge (e);
\path[red] (b) edge (d);
\path[red] (c) edge (f);
\path[red] (a) edge (e);
\path[white] (a) edge [loop above] (a); 
\path[white] (d) edge [loop below] (d); 
\end{tikzpicture}\\
\begin{tikzpicture}[node distance=.25cm,>=stealth',bend angle=45,auto]
\tikzstyle{state}=[circle,draw=black,minimum size=4mm]
\node[state] (a) {$1$};
\node[state] (b) [below right=of a,xshift=3mm] {$2$};
\node[state] (c) [below=of b] {$3$};
\node[state] (d) [below left=of c,xshift=-3mm] {$4$};
\node[state] (e) [above left=of d,xshift=-3mm] {$5$};
\node[state] (f) [above=of e] {$6$};
\path[red] (a) edge (b);
\path[red] (d) edge (f);
\path[red] (c) edge [loop below] (c);
\path[red] (e) edge [loop below] (e);
\path[blue] (b) edge (c);
\path[blue] (e) edge (d);
\path[blue] (a) edge [loop above] (a);
\path[blue] (f) edge [loop above] (f);
\end{tikzpicture}
\hspace{.2in}
\begin{tikzpicture}[node distance=.25cm,>=stealth',bend angle=45,auto]
\tikzstyle{state}=[circle,draw=black,minimum size=4mm]
\node[state] (a) {$1$};
\node[state] (b) [below right=of a,xshift=3mm] {$2$};
\node[state] (c) [below=of b] {$3$};
\node[state] (d) [below left=of c,xshift=-3mm] {$4$};
\node[state] (e) [above left=of d,xshift=-3mm] {$5$};
\node[state] (f) [above=of e] {$6$};
\path[red] (a) edge (c);
\path[red] (d) edge (f);
\path[red] (b) edge [loop above] (b);
\path[red] (e) edge [loop below] (e);
\path[blue] (b) edge (a);
\path[blue] (e) edge (d);
\path[blue] (c) edge [loop below] (c);
\path[blue] (f) edge [loop above] (f);
\end{tikzpicture}
\hspace{.2in}
\begin{tikzpicture}[node distance=.25cm,>=stealth',bend angle=45,auto]
\tikzstyle{state}=[circle,draw=black,minimum size=4mm]
\node[state] (a) {$1$};
\node[state] (b) [below right=of a,xshift=3mm] {$2$};
\node[state] (c) [below=of b] {$3$};
\node[state] (d) [below left=of c,xshift=-3mm] {$4$};
\node[state] (e) [above left=of d,xshift=-3mm] {$5$};
\node[state] (f) [above=of e] {$6$};
\path[red] (b) edge (c);
\path[red] (d) edge (f);
\path[red] (a) edge [loop above] (a);
\path[red] (e) edge [loop below] (e);
\path[blue] (c) edge (a);
\path[blue] (e) edge (d);
\path[blue] (b) edge [loop above] (b);
\path[blue] (f) edge [loop above] (f);
\end{tikzpicture}
\hspace{.2in}
\begin{tikzpicture}[node distance=.25cm,>=stealth',bend angle=45,auto]
\tikzstyle{state}=[circle,draw=black,minimum size=4mm]
\node[state] (a) {$1$};
\node[state] (b) [below right=of a,xshift=3mm] {$2$};
\node[state] (c) [below=of b] {$3$};
\node[state] (d) [below left=of c,xshift=-3mm] {$4$};
\node[state] (e) [above left=of d,xshift=-3mm] {$5$};
\node[state] (f) [above=of e] {$6$};
\path[blue] (a) edge (e);
\path[blue] (b) edge (d);
\path[blue] (c) edge (f);
\path[red] (b) edge (e);
\path[red] (c) edge (d);
\path[red] (a) edge (f);
\path[white] (a) edge [loop above] (a); 
\end{tikzpicture}\\
\begin{tikzpicture}[node distance=.25cm,>=stealth',bend angle=45,auto]
\tikzstyle{state}=[circle,draw=black,minimum size=4mm]
\node[state] (a) {$1$};
\node[state] (b) [below right=of a,xshift=3mm] {$2$};
\node[state] (c) [below=of b] {$3$};
\node[state] (d) [below left=of c,xshift=-3mm] {$4$};
\node[state] (e) [above left=of d,xshift=-3mm] {$5$};
\node[state] (f) [above=of e] {$6$};
\path[red] (a) edge (b);
\path[red] (f) edge (e);
\path[red] (c) edge [loop below] (c);
\path[red] (d) edge [loop below] (d);
\path[blue] (b) edge (c);
\path[blue] (d) edge (f);
\path[blue] (a) edge [loop above] (a);
\path[blue] (e) edge [loop below] (e);
\end{tikzpicture}
\hspace{.2in}
\begin{tikzpicture}[node distance=.25cm,>=stealth',bend angle=45,auto]
\tikzstyle{state}=[circle,draw=black,minimum size=4mm]
\node[state] (a) {$1$};
\node[state] (b) [below right=of a,xshift=3mm] {$2$};
\node[state] (c) [below=of b] {$3$};
\node[state] (d) [below left=of c,xshift=-3mm] {$4$};
\node[state] (e) [above left=of d,xshift=-3mm] {$5$};
\node[state] (f) [above=of e] {$6$};
\path[red] (a) edge (c);
\path[red] (f) edge (e);
\path[red] (b) edge [loop above] (b);
\path[red] (d) edge [loop below] (d);
\path[blue] (b) edge (a);
\path[blue] (d) edge (f);
\path[blue] (c) edge [loop below] (c);
\path[blue] (e) edge [loop below] (e);
\end{tikzpicture}
\hspace{.2in}
\begin{tikzpicture}[node distance=.25cm,>=stealth',bend angle=45,auto]
\tikzstyle{state}=[circle,draw=black,minimum size=4mm]
\node[state] (a) {$1$};
\node[state] (b) [below right=of a,xshift=3mm] {$2$};
\node[state] (c) [below=of b] {$3$};
\node[state] (d) [below left=of c,xshift=-3mm] {$4$};
\node[state] (e) [above left=of d,xshift=-3mm] {$5$};
\node[state] (f) [above=of e] {$6$};
\path[red] (b) edge (c);
\path[red] (f) edge (e);
\path[red] (a) edge [loop above] (a);
\path[red] (d) edge [loop below] (d);
\path[blue] (c) edge (a);
\path[blue] (d) edge (f);
\path[blue] (b) edge [loop above] (b);
\path[blue] (e) edge [loop below] (e);
\end{tikzpicture}
\hspace{.2in}
\begin{tikzpicture}[node distance=.25cm,>=stealth',bend angle=45,auto]
\tikzstyle{state}=[circle,draw=black,minimum size=4mm]
\node[state] (a) {$1$};
\node[state] (b) [below right=of a,xshift=3mm] {$2$};
\node[state] (c) [below=of b] {$3$};
\node[state] (d) [below left=of c,xshift=-3mm] {$4$};
\node[state] (e) [above left=of d,xshift=-3mm] {$5$};
\node[state] (f) [above=of e] {$6$};
\path[blue] (a) edge (f);
\path[blue] (b) edge (e);
\path[blue] (c) edge (d);
\path[red] (b) edge (f);
\path[red] (c) edge (e);
\path[red] (a) edge (d);
\path[white] (a) edge [loop above] (a); 
\path[white] (d) edge [loop below] (d); 
\end{tikzpicture}
\end{center}
\caption{The involution product graphs depicting the twelve ways to write the permutation $(123)(456)$ as the product of two involutions, $\sigma\tau$, where $\sigma$ is denoted in red and $\tau$ in blue.}\label{fig:(123)(456) decompositions}
\end{figure}

 Suppose $\rho' = g\rho g^{-1}$ for some $g \in S_n$. Let $\rho = \sigma\tau$ for some involutions $\sigma$ and $\tau$. Then $\sigma' = g\sigma g^{-1}$ and $\tau' = g\tau g^{-1}$ are also involutions, and we have $\rho' = \sigma'\tau'$. Hence, $N(\rho) = N(\rho')$, and we see that the number of ways to write an element of $S_n$ as a product of involutions depends only on its conjugacy class (we could make the same observation in any finite group).  Partitioning $S_n$ by conjugacy class is equivalent to partitioning it by cycle type.  Thus, for any partition $\lambda \vdash n$, we define $N(\lambda)$ to be the number of ways any particular permutation with cycle type $\lambda$ can be written as a product of involutions.  Carter \cite[Theorem C]{Carter} shows that any element in a finite Weyl group can be written as a product of two involutions, so one could study the same question for conjugacy classes of Weyl groups.

Define 
\begin{equation}\label{eq:R}
R_m(k) = \sum_{0\leq i \leq m/2} \frac{k^{m-i}}{i!}\binom{m}{\underbrace{2,2,\ldots,2}_i} = \sum_{0\leq i \leq m/2} \frac{k^{m-i} m!}{2^i i! (m-2i)!}. 
\end{equation}
We will prove the following formula for $N(\lambda)$. 

\begin{thm}\label{thm:formula}
Let $\lambda = 1^{j_1} 2^{j_2} \cdots $. Then 
\begin{equation}\label{eq:N}
 N(\lambda) = \prod_{i=1}^n R_{j_i}(i).
\end{equation}
\end{thm}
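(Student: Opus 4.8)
The plan is to recast the problem so that one counts a single involution rather than a pair. If $\sigma\tau=\rho$ with $\sigma^2=\tau^2=1$, then $\tau=\sigma^{-1}\rho=\sigma\rho$, so $\tau$ is completely determined by $\sigma$; and $\tau^2=(\sigma\rho)^2=1$ is equivalent to $\sigma\rho\sigma^{-1}=\rho^{-1}$. Thus $(\sigma,\tau)\mapsto\sigma$ is a bijection from the set of involution graphs for $\rho$ onto $\{\sigma\in S_n : \sigma^2=1 \text{ and } \sigma\rho\sigma^{-1}=\rho^{-1}\}$, so $N(\rho)$ equals the number of involutions conjugating $\rho$ to its inverse. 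In the involution product graph picture this $\sigma$ is just the red matching, and the content of the reduction is that the blue matching is forced once the red matching and $\rho$ are fixed.

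Next I would describe how such a $\sigma$ interacts with the cycles of $\rho$. Conjugation preserves cycle lengths and $\rho^{-1}$ has the same cycle type as $\rho$, so $\sigma$ permutes the cycles of $\rho$ within each length class, and being an involution it either fixes a cycle setwise or interchanges two cycles of the same length. The local analysis rests on the identity $\sigma\rho(x)=\rho^{-1}\sigma(x)$, which propagates a single value of $\sigma$ around an entire cycle. For a single $i$-cycle $C=(a_1\,\cdots\,a_i)$ of $\rho$, the condition $\sigma C\sigma^{-1}=C^{-1}$ forces $\sigma(a_t)=a_{s+1-t}$ (indices modulo $i$), where $a_s=\sigma(a_1)$ is arbitrary; each of these $i$ maps is automatically an involution, so a fixed $i$-cycle admits exactly $i$ choices. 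For two distinct $i$-cycles $C$ and $D$, specifying $\sigma(a_1)\in D$ (again $i$ choices) determines $\sigma$ on $C$, the involution requirement then forces $\sigma$ on $D$, and one checks this always yields $\sigma C\sigma^{-1}=D^{-1}$ and $\sigma D\sigma^{-1}=C^{-1}$; so a pair of $i$-cycles also admits exactly $i$ choices.

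Finally I would assemble the product. Fix a length $i$ occurring $j_i$ times in $\lambda$. The restriction of $\sigma$ to the points lying in the $j_i$ cycles of length $i$ is determined by an involution of the set of those $j_i$ cycles --- say with $p$ transpositions and $j_i-2p$ fixed points, of which there are $j_i!/(2^p\,p!\,(j_i-2p)!)$ --- together with one of the $i$ local possibilities for each transposition and for each fixed cycle, contributing a factor $i^{\,p}\cdot i^{\,j_i-2p}=i^{\,j_i-p}$. Summing over $p$ reproduces exactly $R_{j_i}(i)$ from \eqref{eq:R}. Since the point sets attached to distinct cycle lengths are disjoint and each is preserved by $\sigma$, these contributions are independent, and multiplying over $i$ gives $N(\lambda)=\prod_{i=1}^n R_{j_i}(i)$, with $R_0(i)=1$ absorbing the lengths not present in $\lambda$; this completes the proof of Theorem~\ref{thm:formula}.

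The step I expect to be the real work is the verification of the two local counts: confirming that every one of the $i$ candidate maps genuinely is an involution, that it conjugates the relevant cycle(s) to the correct inverse(s) rather than to something incorrectly permuted, and that no extra parity constraint appears when $i$ is even. Once these are secure, the factorization across cycle lengths and the identification of the inner sum with $R_{j_i}(i)$ are routine.
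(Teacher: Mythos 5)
Your proof is correct, and while its counting skeleton ends up identical to the paper's --- components are single $i$-cycles or matched pairs of same-length $i$-cycles, each contributing $i$ choices, summed over matchings to give $R_{j_i}(i)$ and multiplied over lengths --- you reach the local counts by a genuinely different route. The paper works entirely inside the two-colored ``involution product graph'': its Lemma~\ref{lem:cyc} traces blue-red paths to show that a connected component carries either one cycle or two interlaced cycles of equal length, and Corollary~\ref{cor:num} extracts the factor of $k$ from the choice of $v_1$. You instead eliminate $\tau$ at the outset via $\tau=\sigma\rho$, reducing $N(\rho)$ to the number of involutions $\sigma$ with $\sigma\rho\sigma^{-1}=\rho^{-1}$, and then propagate the relation $\sigma\rho=\rho^{-1}\sigma$ around each cycle algebraically; the identity $\sigma(a_t)=a_{s+1-t}$ is automatically an involution for every $s$ and every parity of $i$, and the two-cycle consistency check goes through as you anticipate, so the ``real work'' you flag does close. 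Your reformulation buys a cleaner conceptual statement (the count is the number of ``reversing'' involutions of $\rho$, with no need to argue separately that three or more cycles cannot interact --- that falls out of $\sigma$ inducing an involution on the set of cycles), while the paper's graph-theoretic version buys the pictures and the notion of interlacing that it reuses later when building hook-block involutions in Section~\ref{sec:connections}.
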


Before proving the theorem, we need some preliminary results.

\begin{lem}\label{lem:cyc}
Suppose an involution product graph for $\rho$ has a blue-red path of the form: \[ 1 {\color{blue} \to } v_1 {\color{red} \to} 2 {\color{blue} \to } v_2 {\color{red} \to } 3 {\color{blue} \to } v_3 {\color{red} \to } \cdots {\color{red} \to } k {\color{blue} \to } v_k {\color{red} \to } 1.\] Then both \[ (12\cdots k) \quad \mbox{ and } \quad (v_1\,v_k \cdots v_2)\] are (possibly identical) $k$-cycles in $\rho$.
\end{lem}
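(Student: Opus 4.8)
The plan is to drop the graph picture and argue directly with the two involutions it encodes. Write $\sigma$ for the red permutation and $\tau$ for the blue one, so that (with the composition convention of the preamble) $\rho = \sigma\tau$ acts by $\rho(x) = \sigma(\tau(x))$, and $\sigma^2 = \tau^2 = 1$. The hypothesized blue-red path records exactly the equations
\[
\tau(i) = v_i \quad (1 \le i \le k), \qquad \sigma(v_i) = i+1 \quad (1 \le i \le k-1), \qquad \sigma(v_k) = 1,
\]
and from here on I read the subscripts on the $v$'s cyclically, setting $v_0 := v_k$. Being a path, the vertices $1, 2, \ldots, k$ are distinct, and then so are $v_1, \ldots, v_k$, since $v_i = v_j$ would force $\tau(i) = \tau(j)$ and hence $i = j$. (The $v_i$ are permitted to coincide with some of the numbered vertices -- this happens precisely when $\tau$ fixes that vertex -- but that causes no trouble below.)

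For the first cycle, I would simply compose: $\rho(i) = \sigma(\tau(i)) = \sigma(v_i) = i+1$ for $i < k$, and $\rho(k) = \sigma(v_k) = 1$, so $\rho$ cyclically permutes the distinct elements $1, \ldots, k$, i.e.\ $(1\,2\,\cdots\,k)$ is a $k$-cycle of $\rho$. For the second cycle, I would first invoke the involution property to reverse each edge of the path: $\tau(v_i) = i$, and $\sigma(j) = v_{j-1}$ for all $j$ (cyclically, so $\sigma(1) = v_k$). Then $\rho(v_j) = \sigma(\tau(v_j)) = \sigma(j) = v_{j-1}$, so $\rho$ sends $v_1 \mapsto v_k \mapsto v_{k-1} \mapsto \cdots \mapsto v_2 \mapsto v_1$; since the $v_i$ are distinct, this says $(v_1\,v_k\,v_{k-1}\,\cdots\,v_2)$ is a $k$-cycle of $\rho$. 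I would close with the remark that these two $k$-cycles can genuinely be the same cycle of $\rho$ -- for instance when $\{1,\dots,k\} = \{v_1,\dots,v_k\}$ and $\sigma,\tau$ restrict to involutions of that set -- which is the point of the parenthetical ``possibly identical.''

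The computation is entirely routine; the only points demanding attention are getting the composition order in $\rho = \sigma\tau$ right, reading the index on the wrap-around edge $v_k \to v_0$ cyclically, and being explicit that the word ``path'' is exactly what supplies distinctness of $1, \ldots, k$ (hence of the $v_i$) -- without distinctness one would only conclude that $\rho$ traces out these vertices along closed walks, rather than that they constitute honest $k$-cycles.
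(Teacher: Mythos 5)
Your proof is correct and is essentially the paper's argument translated from graph language into explicit equations for $\sigma$ and $\tau$: the distinctness of the $v_i$ comes from each vertex having exactly one edge of each color (equivalently, injectivity of $\tau$), and your use of $\sigma^2=\tau^2=1$ to reverse each edge is exactly the paper's ``all arrows in the product graph are reversible,'' yielding the same reversed blue-red path and hence the cycle $(v_1\,v_k\cdots v_2)$. No gaps; the composition convention and the cyclic wrap-around are handled correctly.
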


\begin{proof}
Fix an involution product graph of $\rho$ with the blue-red path described. Then obviously $(12\cdots k)$ is a $k$-cycle in $\rho$. Consider now $(v_1\, v_k \cdots v_2)$.

Since the involution graph has precisely one edge of each color at every vertex, we see that $v_i \neq v_j$ if $i\neq j$.  That is, the path cannot have both $i {\color{blue} \to } v_i {\color{red} \to } (i+1)$ and $j {\color{blue} \to } v_i {\color{red} \to } (j+1)$ if $i \neq j$. Hence, $|\{v_1,\ldots, v_k\}|=k$, and because all arrows in the product graph are reversible, we get another blue-red path that shows $(v_1 v_k v_{k-1} \cdots v_2)$ is a $k$-cycle in $\rho$:  \[ v_1 {\color{blue} \to } 1 {\color{red} \to} v_k {\color{blue} \to } k {\color{red} \to } v_{k-1} {\color{blue} \to } (k-1) {\color{red} \to } \cdots {\color{red} \to } v_2 {\color{blue} \to } 2 {\color{red} \to } v_1.\] 
\end{proof}

If differently colored cycles in an involution product graph form blue-red path as described in Lemma~\ref{lem:cyc}, then those cycles are \emph{interlaced}.  If a cycle does not interlace with any other, then that cycle is \emph{isolated}.

A first consequence of Lemma \ref{lem:cyc} is the following.

\begin{cor}\label{cor:match}
The connected components of an involution product graph of a permutation $\rho$ each either describe a cycle in $\rho$ or two disjoint cycles of the same size. In particular, 
\begin{enumerate}
\item three or more cycles cannot be interlaced in an involution product graph, and
\item if $k \neq k'$, then a $k$-cycle and a $k'$-cycle cannot be interlaced in an involution product graph.
\end{enumerate}
\end{cor}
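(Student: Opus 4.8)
The plan is to understand an arbitrary connected component $C$ of an involution product graph for $\rho$ by pushing Lemma~\ref{lem:cyc} as far as it will go. First I would observe that $C$ is closed under $\rho=\sigma\tau$: from a vertex of $C$, following the blue edge and then the red edge lands back in $C$, so $C$ is a union of cycles of $\rho$. Choose one of these cycles and relabel $[n]$ so that it is $(12\cdots k)$, with $\rho(i)\equiv i+1\pmod k$; then $v_i:=\tau(i)$ produces precisely the blue--red path of Lemma~\ref{lem:cyc}. That lemma tells us the $v_i$ are distinct, that they all lie in $C$, and that $D:=(v_1\,v_k\cdots v_2)$ is a $k$-cycle of $\rho$ (possibly $D=(12\cdots k)$).

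Next I would show $C$ is exactly $V':=\{1,\dots,k\}\cup\{v_1,\dots,v_k\}$. Reading the Lemma's path we have $\tau(i)=v_i$ and $\sigma(v_i)=i+1$ for each $i$; since $\sigma$ and $\tau$ are involutions this forces $\tau(v_i)=i$ and $\sigma(i+1)=v_i$ as well, so $V'$ is invariant under both $\sigma$ and $\tau$, hence is a union of connected components of the graph. Since the blue--red path connects all vertices of $V'$ and $V'\ni 1$, we get $V'=C$. The cycles of $\rho$ lying in $V'$ are exactly the disjoint cycles $(12\cdots k)$ and $D$, whose vertex sets together exhaust $V'$. Thus $C$ describes a single $k$-cycle when $D=(12\cdots k)$ (so $|V'|=k$) and two disjoint $k$-cycles when $D\neq(12\cdots k)$ (so $|V'|=2k$), which is the main claim.

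The two itemized statements then follow formally. Any two interlaced cycles share one of the blue--red paths of Lemma~\ref{lem:cyc}, hence lie in a common connected component; since a component carries at most two cycles of $\rho$, no three distinct cycles can be interlaced (even ``in a chain''), giving (1). For (2), if a $k$-cycle and a $k'$-cycle are interlaced, the component they share describes two disjoint cycles \emph{of equal size}, so $k=k'$.

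I expect the only delicate point to be the invariance bookkeeping for $V'$, specifically checking that the argument is unaffected by loops — i.e.\ by fixed points of $\sigma$ or $\tau$ among the vertices of $V'$. This is handled uniformly because the identities $\tau(i)=v_i$ and $\sigma(v_i)=i+1$ extracted from the path remain valid regardless of whether any of these values happens to be a fixed point, so no separate case analysis for loops is needed.
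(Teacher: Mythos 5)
Your argument is correct and matches the paper's intent exactly: the paper states this corollary as an immediate consequence of Lemma~\ref{lem:cyc} without writing out a proof, and what you have done is carefully verify that implication (identifying each component with $\{1,\dots,k\}\cup\{v_1,\dots,v_k\}$ via invariance under $\sigma$ and $\tau$, then using that distinct cycles of $\rho$ have equal or disjoint supports). No gaps; your treatment of fixed points/loops is also fine.
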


The next consequence is the key to our enumeration of involution product graphs

\begin{cor}\label{cor:num}
\begin{enumerate}
\item There are precisely $k$ ways to write a $k$-cycle as a product of involutions.
\item There are precisely $k$ ways to write two disjoint $k$-cycles as a product of involutions, given that their elements are in the same connected component of the involution graph. 
\end{enumerate}
\end{cor}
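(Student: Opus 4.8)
The strategy is to describe each involution graph of the relevant type explicitly by tracking where a single distinguished edge can go, and to show that this one choice determines the entire graph. For part (1), let $\rho$ have the $k$-cycle $(1\,2\,\cdots\,k)$ as an isolated cycle in the involution product graph, so by Corollary~\ref{cor:match} the connected component on $\{1,\dots,k\}$ consists of exactly the red and blue edges at those vertices. I would argue that the graph is completely determined once we know the blue edge $1 {\color{blue}\to} v_1$ at the vertex $1$, where necessarily $v_1 \in \{1,\dots,k\}$: from $v_1$ the red edge must go to $2$ (so that the blue-red path built as in Lemma~\ref{lem:cyc} realizes $\rho(1)=2$), from $2$ the blue edge goes to some $v_2$ with $v_2 {\color{red}\to} 3$, and so on around the cycle, so that $v_i$ is forced by $v_1$ via $v_i = \text{(blue edge at }i)$ and the constraint $v_i {\color{red}\to} i+1$. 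Since an involution is determined by its matching, specifying $v_1$ determines $\sigma$ (the red matching) and $\tau$ (the blue matching) on this component. Conversely, for each of the $k$ choices of $v_1 \in \{1,\dots,k\}$ one checks the resulting red and blue edge sets really are partial matchings (this is where Lemma~\ref{lem:cyc}'s observation $|\{v_1,\dots,v_k\}| = k$ does the work — the $v_i$ are distinct, so no vertex gets two blue or two red edges) and that the product is indeed the $k$-cycle. Hence there are exactly $k$ such graphs.

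For part (2), suppose the involution graph has two disjoint $k$-cycles $C = (1\,2\,\cdots\,k)$ and $C' = (w_1\,w_2\,\cdots\,w_k)$ of $\rho$ lying in the same connected component, so by Lemma~\ref{lem:cyc} and Corollary~\ref{cor:match} they are interlaced and the component on the $2k$ vertices is exactly the blue-red path $1 {\color{blue}\to} v_1 {\color{red}\to} 2 {\color{blue}\to} \cdots {\color{red}\to} k {\color{blue}\to} v_k {\color{red}\to} 1$ with $\{v_1,\dots,v_k\}$ equal to the vertex set of $C'$. Again I would show the whole component is pinned down by the single blue edge at the vertex $1$, i.e.\ by which element $v_1$ of $C'$ it attaches to: once $v_1$ is chosen, Lemma~\ref{lem:cyc} forces $v_1$ to lie on a $k$-cycle $(v_1\,v_k\,\cdots\,v_2)$ of $\rho$, which must be $C'$, and this determines $v_k, v_{k-1}, \dots, v_2$ as consecutive preimages along $C'$; then the red edges $v_i {\color{red}\to} i+1$ and blue edges $i {\color{blue}\to} v_i$ are all determined. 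There are exactly $k$ choices for which vertex of $C'$ is $v_1$, and each yields a valid involution graph, so the count is $k$.

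The main obstacle is the verification that each of the $k$ candidate configurations genuinely is an involution product graph — that the red edges form a partial matching, the blue edges form a partial matching, and their superposition really evaluates to the prescribed cycle(s) rather than to some other permutation. This is exactly the content secured by Lemma~\ref{lem:cyc}: the distinctness $|\{v_1,\dots,v_k\}| = k$ guarantees the matching condition, and the reversibility of the blue-red path guarantees both cycles (or the single cycle, in case (1), when the path happens to close up through the same vertex set) appear as claimed. A secondary point to be careful about is that in part (1) the $k$-cycle may be ``interlaced with itself'' — the path $1 {\color{blue}\to} v_1 {\color{red}\to} 2 {\color{blue}\to} \cdots {\color{red}\to} 1$ closes up using vertices only among $\{1,\dots,k\}$ — and one should note this is consistent with $v_1$ ranging over all of $\{1,\dots,k\}$ (including $v_1 = k$, giving the blue edge $1 {\color{blue}\to} k$, and $v_1 = 1$ giving a blue loop at $1$), so no choices are lost or double-counted. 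Once these checks are in place, the bijection ``graph $\leftrightarrow$ choice of $v_1$'' is immediate and the corollary follows.
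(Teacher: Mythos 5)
Your proposal is correct and follows essentially the same route as the paper: the paper's entire proof is the single observation that, once the cycle(s) of the component are fixed, the designation of $v_1$ uniquely determines every edge of that connected component, giving exactly $k$ choices. Your additional verification that each choice of $v_1$ yields a genuine involution product graph (via the distinctness of the $v_i$ and the reversed path from Lemma~\ref{lem:cyc}) is a reasonable elaboration of details the paper leaves implicit.
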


\begin{proof}
If the cycles from Lemma \ref{lem:cyc} are fixed, then the designation of the label $v_1$ uniquely determines all the edges of this connected component of the involution graph.
\end{proof}

\begin{prop}\label{prp:rec}
Suppose $\lambda = k^m = (k,k,\ldots,k) \vdash km$. Then, \[ N(k^m) = R_m(k), \] where $R_m(k)$ is defined in Equation \eqref{eq:R}.
\end{prop}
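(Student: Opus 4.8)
\emph{Proof proposal.} The plan is to decompose an involution graph for $\rho$ into its connected components and to count the graphs multiplicatively over those components. By Corollary~\ref{cor:match}, since $\rho$ has cycle type $k^m$ and all of its cycles have the same size $k$, every connected component of an involution graph for $\rho$ is either an isolated $k$-cycle of $\rho$ or a pair of interlaced $k$-cycles of $\rho$; nothing else can occur (three or more cycles cannot interlace, and the constraint on unequal cycle sizes is vacuous here). So an involution graph for $\rho$ is the same data as: a partial matching on the set of $m$ cycles of $\rho$ recording which pairs are interlaced, together with, for each resulting connected component, a choice of one of the realizations counted in Corollary~\ref{cor:num}.

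First I would fix the number $i$ of interlaced pairs, with $0 \le i \le m/2$, and count the partial matchings on the $m$ cycles having exactly $i$ pairs: choose the $2i$ cycles to be matched and partition them into $i$ unordered pairs, giving
\[
\binom{m}{2i}\,\frac{(2i)!}{2^i\,i!} \;=\; \frac{m!}{2^i\,i!\,(m-2i)!},
\]
which is exactly $\tfrac{1}{i!}\binom{m}{2,2,\ldots,2}$ with $i$ twos and $m-2i$ leftover, unpaired cycles. Next, for a fixed such matching, Corollary~\ref{cor:num}(1) contributes a factor of $k$ for each of the $m-2i$ isolated cycles, and Corollary~\ref{cor:num}(2) contributes a factor of $k$ for each of the $i$ interlaced pairs. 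These choices are made independently over the disjoint vertex sets of the components, and distinct choices produce distinct involution graphs (again by Corollary~\ref{cor:num}, applied component by component), so this matching accounts for exactly $k^{m-2i}\cdot k^{i}=k^{m-i}$ involution graphs. Summing over matchings and over $i$ then gives
\[
N(k^m) \;=\; \sum_{0\le i\le m/2} \frac{m!}{2^i\,i!\,(m-2i)!}\,k^{m-i} \;=\; R_m(k),
\]
the last equality being the second form of the definition in Equation~\eqref{eq:R}.

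I do not expect a genuine obstacle; the one point needing care is that the correspondence between involution graphs for $\rho$ and pairs (partial matching on the cycles of $\rho$, per-component realization) is a bijection. This rests on two things already in hand: the matching is intrinsic to the graph — two cycles are matched precisely when they share a connected component, and a cycle is isolated precisely when its component contains no other cycle — which is the content of Corollary~\ref{cor:match}; and, with the matching fixed, the per-component realizations are mutually independent and each faithfully recorded, which is Corollary~\ref{cor:num}. Everything else is the routine count of partial matchings displayed above. As a sanity check, $R_m(1)$ is the number of involutions in $S_m$, consistent with the fact that writing the identity of $S_m$ as $\sigma\tau$ forces $\tau=\sigma^{-1}=\sigma$.
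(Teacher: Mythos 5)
Your proposal is correct and follows essentially the same route as the paper: decompose the involution graph into connected components via Corollary~\ref{cor:match}, count the partial matchings with $i$ interlaced pairs by $\frac{m!}{2^i i! (m-2i)!}$, attach a factor of $k$ per component via Corollary~\ref{cor:num}, and sum over $i$. The extra care you take about the bijectivity of the correspondence, and the sanity check that $R_m(1)$ counts involutions in $S_m$, are welcome but do not change the argument.
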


\begin{proof}
Let $\rho$ be a fixed permutation of cycle type $\lambda$. By Corollary \ref{cor:match}, an involution graph for $\rho$ either has all the $k$-cycles in disjoint connected components, or some pairs of the cycles are connected. For example, with $m=7$ $k$-cycles, perhaps two pairs of them are interlaced, while the other three are isolated, as in the following picture.
$$\begin{tikzpicture}[node distance=.25cm,>=stealth',bend angle=45,auto]
\tikzstyle{state}=[ellipse,draw=black,minimum size=4mm]
\node[state] (a) {\tiny $k$-cycle};
\node[state] (b) [below right=of a,xshift=12mm] {\tiny $k$-cycle};
\node[state] (c) [below=of b] {\tiny $k$-cycle};
\node[state] (d) [below left=of c] {\tiny $k$-cycle};
\node[state] (e) [left=of d] {\tiny $k$-cycle};
\node[state] (f) [above left=of e] {\tiny $k$-cycle};
\node[state] (g) [above=of f] {\tiny $k$-cycle};
\path[dashed] (a) edge (c);
\path[dashed] (d) edge (g);
\end{tikzpicture}$$

There are precisely \[\frac{1}{i!}\binom{ m}{ \underbrace{2,2,\ldots,2}_i } = \frac{m!}{2^i i!(m-2i)!} \] ways to choose a matching of $i$ pairs of the $k$-cycles, leaving $m-2i$ of the $k$-cycles isolated. Given such a matching, Corollary \ref{cor:num} tells us there are $k$ ways to draw each connected component, giving  $k^i\cdot k^{m-2i} = k^{m-i}$ choices. Hence, the number of involution product graphs for $\rho$ that have $i$ pairs of $k$-cycles is: \[ \frac{k^{m-i}m!}{2^i i!(m-2i)!}.\] 

To count all possible involution graphs for $\rho$, we sum over all $i$ to obtain the desired result: \[ N(\rho) = \sum_{0\leq i \leq m/2} \frac{k^{m-i} m!}{2^i i! (m-2i)!} = R_m(k).\] 
\end{proof}

We are now able to prove Theorem \ref{thm:formula}.

\begin{proof}[Proof of Theorem \ref{thm:formula}]
Let $\rho$ be a permutation of type $\lambda = 1^{j_1} 2^{j_2}\cdots$. By Corollary~\ref{cor:match}, two cycles of $\rho$ are in different connected components if they are of different size. Hence, for $k\neq k'$, the $k$-cycles and $k'$-cycles are mutually independent, and to count the number of involution product graphs for $\rho$, we simply multiply the number of possibilities for each cycle size:
\[ N(\lambda) = N(1^{j_1})\cdot N(2^{j_2}) \cdots = \prod_{j=1}^n R_{j_i}(i).\]
\end{proof}

\section{Connections with the character table of $S_n$}\label{sec:connections}

Now that we have enumerated the number of pairs of involutions with a fixed product, we will  describe why understanding such involution products may be related to characters of the symmetric group. Our tone is primarily expository here, built upon substantial work of White \cite{white} and Stanton and White \cite{stanton-white}. Omitted details may be found in those papers.

To begin, we recall that the well-known Murnaghan-Nakayama rule provides the following  formula for an irreducible character of the symmetric group $S_n$: \[ \chi_\mu^\lambda = \sum_{T} (-1)^{\h(T)},\] where the sum is taken over all \emph{rim-hook tableaux} $T$ of shape $\lambda$ and content $\mu$, and $\h(T)$ is the sum of the heights of the hooks in $T$ minus the number of hooks. In general, $(-1)^{\h(T)}$ gives the \emph{sign} of a tableau $T$.  For example, \[T=\young(11113,12333,2246,244,5)\] is a rim-hook tableau with content $\mu =(5,4,4,3,1,1)$, its height is calculated by $\h(T) = 2+3+2+2+1+1 - 6 = 5$, and $(-1)^{\h(T)}<0$ so $T$ is a negative tableau. See either \cite[Sec. 4.10]{sagan} or \cite[Sec. 7.17]{ec2} for precise definitions and further discussion.

This formula is lovely in that it gives an explicit combinatorial description for the characters. However, it involves tremendous cancellation. For instance, consider the character table for $S_6$ in Figure \ref{fig:S6}. There are 40 rim-hook tableau of content $(2,2,1,1)$, yet in the column indexed by $\mu = (2,2,1,1)$ we see that only 12 terms are non-cancelling; that is, the sum of the absolute values of the entries in this column is $1 + 1 + 1 + 2 + 1 + 2 + 1 + 1 + 1 + 1 = 12$.

\begin{figure}[h]
\begin{tabular}{l | c c c c c c c c c c c }
$\lambda\backslash\mu$ & 6 & 51 & 42 & 411 & 33 & 321 & 3111 & 222 & 2211 & 21111 & 111111 \\
\hline 
6 & 1 & 1 & 1 & 1 & 1& 1& 1& 1& 1& 1& 1 \\
51 & -1 & 0 & -1 & 1 & -1 & 0 & 2 & -1 &1 & 3 & 5 \\
42 & 0 & -1 &1 &-1 &0 &0 &0 &3& 1& 3& 9 \\
411 & 1 &0&0&0 &1& -1& 1 &-2& -2& 2& 10\\
33 & 0&0& -1& -1& 2& 1& -1& -3& 1& 1& 5\\
321 & 0 &1& 0&0 &-2& 0& -2& 0&0&0& 16 \\
3111 & -1 &0&0&0& 1&1&1& 2& -2& -2 &10 \\
222 & 0 &0 &-1& 1& 2& -1& -1& 3 &1 &-1& 5\\ 
2211 & 0 &-1 & 1& 1& 0&0&0& -3& 1& -3& 9\\
21111 & 1 &0 &-1& -1& -1& 0 &2 &1& 1& -3& 5 \\
111111 & -1 &1 &1& -1& 1& -1&1& -1& 1& -1& 1\\
\hline
$N(\mu)$ & 6 & 5 & 8 & 8 & 12 & 6 & 12 & 20 & 12 & 20 & 76
\end{tabular}
\caption{The character table of $S_6$, along with $N(\mu)$, the number of ways to write a permutation of cycle type $\mu$ as a product of involutions.}\label{fig:S6}
\end{figure}

On the other hand, there are precisely 12 rim hook tableaux of content (3,3), and so there is no cancellation in this column. As we will see shortly, it follows from work of White \cite{white} that, when all parts of $\mu$ are the same size, the Murnaghan-Nakayama formula for $\chi_\mu^\lambda$ is \emph{cancellation-free} (Corollary \ref{cor:MNcancelfree}). Such a partition $\mu$ is called \emph{rectangular}.  Our feeling is that understanding involution products may lead to a similar cancellation-free formula for any content $\mu$. 

The idea begins with the following observation. For any $\lambda$, the Murnaghan-Nakayama rule gives \[\chi^{\lambda}_{1^n} = |\{ \mbox{standard Young tableaux of shape } \lambda\}|.\] On the other hand, the Schensted insertion algorithm  gives a bijection between the set of standard Young tableaux with $n$ boxes and the set of involutions in $S_n$. If we let $I^{\lambda} = \{ \sigma \in S_n : \sigma^2 = 1, \sh(\sigma) = \lambda\}$, where $\sh(\sigma)$ denotes the shape of the image of $\sigma$ upon the Schensted insertion, then \[ \chi^\lambda_{1^n} = |I^\lambda|.\] To generalize this idea to $\mu \neq 1^n$, we first need to discuss a more general insertion algorithm. 

In \cite{white}, White defines a map \sch\ that generalizes the usual Schensted insertion algorithm. Given a pair of rim hook tableaux $(P,Q)$ of the same shape and content, we say the pair is \emph{positive} if $P$ and $Q$ have the same sign; the pair is \emph{negative} otherwise. The map \sch\ provides a bijection that maps a positive pair $(P,Q)$ of content $\mu$ to either a negative pair $(P',Q')$ of tableaux of content $\mu$ or to a ``hook permutation'' of content $\mu$. (It will not be necessary to give a precise definition of \sch\ here. The interested reader can refer to \cite{white}.)  The case of rectangular content is particularly nice. 

\begin{prop}[\cite{white}, Corollaries 9 and 10]\label{prop:no cancellation for rectangular content}
If $\mu = k^m$, then \sch\ is a bijection from hook permutations of content $\mu$ to the set of all pairs $(P,Q)$ of rim hook tableaux of the same shape and content $\mu$. In particular, every pair of tableaux $(P,Q)$ is positive.
\end{prop}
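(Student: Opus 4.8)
My plan is to derive the proposition from the structural property of White's map \sch\ recalled just before the statement: \sch\ is a bijection from the set of positive pairs $(P,Q)$ of content $\mu$ onto the disjoint union of the set of negative pairs of content $\mu$ with the set of hook permutations of content $\mu$. Granting that, it suffices to prove that when $\mu = k^m$ there are \emph{no} negative pairs of content $\mu$ whatsoever --- equivalently, that any two rim hook tableaux of the same shape $\lambda \vdash km$ and content $k^m$ have the same sign. Once this is established, the image of \sch\ is exactly the set of hook permutations of content $\mu$, so \sch\ becomes a bijection from positive pairs onto hook permutations; and the absence of negative pairs says precisely that the set of positive pairs is the set of \emph{all} pairs $(P,Q)$ of rim hook tableaux of the same shape and content $\mu$, which is the assertion ``every pair is positive.''

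The heart of the matter is therefore the following sign statement: for a fixed $\lambda \vdash km$, the quantity $(-1)^{\h(T)}$ takes the same value for every rim hook tableau $T$ of shape $\lambda$ and content $k^m$. (If no such $T$ exists the statement is vacuous.) I would prove it with the abacus. Encode $\lambda$ by a bead configuration on a one-runner abacus --- a $\beta$-set of $\lambda$, say its first-column hook lengths, padded so that the number of beads is divisible by $k$. A rim hook tableau of shape $\lambda$ and content $k^m$ then corresponds to a sequence of $m$ moves, each sliding a single bead from an occupied position $p$ down to the vacant position $p-k$; the leg length (height minus one) of the $k$-rim hook removed at that step equals the number of beads lying strictly between positions $p-k$ and $p$ at that moment. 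Hence $(-1)^{\h(T)} = \prod_{j=1}^m (-1)^{\ell_j}$, where $\ell_j$ is the number of beads jumped on the $j$th move.

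Now observe that the final bead configuration reached after all $m$ moves is the $\beta$-set of the $k$-core of $\lambda$ with the prescribed number of beads, which is independent of $T$, and that the product $\prod_j (-1)^{\ell_j}$ is nothing but the sign of the permutation matching the initial bead positions to the final ones (each bead staying on its own runner, i.e. moving within its residue class modulo $k$). Since the sign of a permutation depends only on its two endpoint configurations, this product is independent of the order in which the rim hooks were stripped, hence independent of $T$; call its common value $\epsilon_k(\lambda)\in\{\pm1\}$. Thus $(-1)^{\h(T)} = \epsilon_k(\lambda)$ for every $T$ of shape $\lambda$ and content $k^m$, which is exactly the sign statement. (Alternatively one may invoke the generalized Murnaghan--Nakayama rule $\chi^\lambda_{k^m} = \epsilon_k(\lambda)\binom{m}{|\lambda^{(0)}|,\dots,|\lambda^{(k-1)}|}\prod_i f^{\lambda^{(i)}}$, where $f^\nu$ is the number of standard Young tableaux of shape $\nu$ and $(\lambda^{(0)},\dots,\lambda^{(k-1)})$ is the $k$-quotient of $\lambda$: the nonnegative integer multiplying $\epsilon_k(\lambda)$ is precisely the number of rim hook tableaux of shape $\lambda$ and content $k^m$, so the identity $\chi^\lambda_{k^m} = \sum_T (-1)^{\h(T)}$ forces every summand to carry the sign $\epsilon_k(\lambda)$.)

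The step I expect to be the main obstacle is making the abacus bookkeeping airtight: carefully verifying the identity ``leg length of the stripped $k$-rim hook $=$ number of beads jumped,'' and then cleanly identifying the telescoped product $\prod_j(-1)^{\ell_j}$ with the sign of a single order-independent permutation. The remaining implications are formal --- the reduction to ``no negative pairs'' uses only the quoted property of \sch, and the passage from the sign statement to ``every pair is positive'' and ``\sch\ surjects onto all pairs'' is immediate from White's definitions.
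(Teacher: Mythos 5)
Your argument is correct, but note that the paper does not prove this statement at all: it is imported from White's Corollaries 9 and 10, where it is established internally to the construction of \sch. Your route is therefore genuinely different. Taking as given only the structural property recalled just before the statement---that \sch\ bijects positive pairs of content $\mu$ with the disjoint union of negative pairs and hook permutations---you reduce the proposition to the claim that all rim hook tableaux of a fixed shape $\lambda$ and content $k^m$ carry the same sign, and you prove that claim by the classical abacus argument: $(-1)^{\h(T)}$ telescopes to the sign of the permutation matching the initial bead configuration to the final one, and since beads move only within their residue classes modulo $k$ and never pass one another there, that permutation depends only on $\lambda$ and $k$, not on $T$. The bookkeeping you flag as the main obstacle (leg length of the stripped hook equals the number of beads jumped; each jump over $\ell$ beads contributes a cycle of sign $(-1)^{\ell}$ to the ranking permutation) is the standard James--Kerber fact and does go through. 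What your approach buys is a derivation that uses only classical rim-hook combinatorics rather than the internals of White's algorithm; what it costs is twofold. First, your logical order is the reverse of the paper's: the paper deduces the sign-constancy statement (Corollary \ref{cor:MNcancelfree}) \emph{from} this proposition, whereas you prove sign constancy first and deduce the proposition from it---both are valid, but in your write-up Corollary \ref{cor:MNcancelfree} becomes the key lemma rather than a consequence, and must not then be cited circularly. Second, you have not removed the dependence on \cite{white}: the bijection property of \sch\ that you grant at the outset is itself White's main theorem, so your argument replaces only one of the two imported inputs.
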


The following is immediate.

\begin{cor}\label{cor:MNcancelfree}
Any two rim hook tableaux of the same shape and the same rectangular content have the sign.  Hence, \[ |\chi^{\lambda}_{k^m}| = |\{ P : P \mbox{ has shape $\lambda$ and content $k^m$} \}|.\] In other words, there is no cancellation in the Murnaghan-Nakayama formula for $\chi^{\lambda}_\mu$ when $\mu$ is rectangular.
\end{cor}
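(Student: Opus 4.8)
The plan is to deduce the corollary directly from Proposition~\ref{prop:no cancellation for rectangular content}, which is carrying all the real weight; what remains is a short logical repackaging together with the Murnaghan--Nakayama rule.

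First I would establish the sign statement. Fix a shape $\lambda$ and rectangular content $\mu = k^m$, and let $P_1$ and $P_2$ be any two rim hook tableaux of shape $\lambda$ and content $\mu$. Then $(P_1,P_2)$ is a pair of rim hook tableaux of the same shape and the same content, so it lies in the image of the bijection \sch\ from Proposition~\ref{prop:no cancellation for rectangular content}; by the ``in particular'' clause of that proposition, the pair is positive, i.e.\ $P_1$ and $P_2$ have the same sign, $(-1)^{\h(P_1)} = (-1)^{\h(P_2)}$. Since $P_1$ and $P_2$ were arbitrary, all rim hook tableaux of shape $\lambda$ and rectangular content $k^m$ carry a common sign $\varepsilon_\lambda \in \{+1,-1\}$ (the assertion being vacuously true when there is no such tableau).

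Next I would feed this into the Murnaghan--Nakayama rule $\chi^\lambda_\mu = \sum_T (-1)^{\h(T)}$, the sum ranging over rim hook tableaux $T$ of shape $\lambda$ and content $\mu$. When $\mu = k^m$ every summand equals the same value $\varepsilon_\lambda$, so $\chi^\lambda_{k^m} = \varepsilon_\lambda \cdot |\{ T : T \text{ has shape } \lambda \text{ and content } k^m \}|$, and hence $|\chi^\lambda_{k^m}|$ equals that cardinality; if the set of such tableaux is empty, both sides are $0$. This is precisely the statement that the Murnaghan--Nakayama formula for $\chi^\lambda_\mu$ is cancellation-free whenever $\mu$ is rectangular.

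I do not anticipate a genuine obstacle here. The one point requiring care is the direction of the implication: Proposition~\ref{prop:no cancellation for rectangular content} is phrased in terms of \emph{pairs} of tableaux, and one has to notice that applying it to an arbitrary pair $(P_1,P_2)$ of tableaux of the same shape and content immediately forces the sign of a single such tableau to be well defined — there is no separate induction or combinatorial argument needed, since \sch\ has already been shown to leave no negative pairs of rectangular content.
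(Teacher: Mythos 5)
Your argument is correct and is exactly the reasoning the paper has in mind: it presents this corollary as an immediate consequence of Proposition~\ref{prop:no cancellation for rectangular content} (every pair of same-shape, same-rectangular-content tableaux is positive, hence all such tableaux share a sign, and Murnaghan--Nakayama then gives $|\chi^{\lambda}_{k^m}|$ as a count with no cancellation). Your write-up simply makes explicit the one-line deduction the paper leaves to the reader.
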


We will now see how this result is related to involution products.

To any pair of involutions we may assign a special kind of hook permutation, which we call a ``hook-block involution," as follows.  We create this involution by looking at all of the the $k$-cycles in the involution product graph, in decreasing order of $k$.  Suppose that we have examined the $k'$-cycles for all $k'>k$, and so far created a hook-block involution on the letters $\{1,\ldots,M\}$.  Now suppose that there are $r$ $k$-cycles in the involution product graph.  Label these $\{M+1, \ldots, M+r\}$ in order of the smallest element appearing in each.  We thus inherit an involution of $\{M+1, \ldots, M+r\}$, based on which $k$-cycles are interlaced.  To each letter in this involution, we associate a hook, where the choice of $v_1$ in Lemma \ref{lem:cyc} determines the height of the first column in each hook of the 2-cycle: if $v_1$ is in the $i$th position when the cycle is written with the smallest letter in the first position, then the height of that first column in the hook is $i$.  Note that any pair of letters that are each other's images under the involution get assigned the same hook shape, by Lemma~\ref{lem:cyc}.  We demonstrate with an example.

\begin{example}
\[ \begin{minipage}{1.5in}
\begin{tikzpicture}[node distance=.25cm,>=stealth',bend angle=45,auto]
\tikzstyle{state}=[circle,draw=black,minimum size=4mm]
\node[state] (a) {$1$};
\node[state] (b) [below right=of a,xshift=5mm] {$2$};
\node[state] (c) [below=of b] {$3$};
\node[state] (d) [below left=of c] {$4$};
\node[state] (e) [left=of d] {$5$};
\node[state] (f) [above left=of e] {$6$};
\node[state] (g) [above=of f] {$7$};
\path[red] (a) edge (d);
\path[red] (b) edge (c);
\path[red] (e) edge[bend right] (g);
\path[red] (f) edge [loop left] (f);
\path[blue] (a) edge [loop above] (a);
\path[blue] (b) edge (g);
\path[blue] (c) edge[bend right] (e);
\path[blue] (d) edge[bend right] (f);
\end{tikzpicture}
\end{minipage}
\longleftrightarrow \left(\, \young(111) \quad \young(3,3) \quad \young(2,2) \, \right)\]
First of all, the product is $(146)(25)(37)$, which has cycle type $(3,2,2)$.  Hence we know the hook involution will have a 3-hook and two 2-hooks. Since 1 is the smallest element of the 3-cycle, we see where its blue arrow points to determine the height of the 3-hook. It points to itself, so we choose the hook of height 1. (If the 1 had had a blue arrow to the 4, then the 3-hook would have had height 2. If it had pointed to the 6 it would have had height 3.) There are two 2-cycles, and they lie in the same connected component of the matching graph. Hence, the corresponding 2-hooks are transposed in the hook permutation. The shape of the hooks is determined by where the 2 is connected with its blue edge. If it had been connected to the 3, then the 2-hooks would be horizontal. However, the blue edge connects to the 7 (which is the second-largest element in its cycle), and so the hooks have height 2.
\end{example}

In the case where all the hooks are the same size, we get what Stanton and White \cite{stanton-white} call a ``$k$-partial involution." Indeed many things work nicely when the cycles all have the same size. Stanton and White \cite{stanton-white} show the following.

\begin{thm}[\cite{stanton-white}]\label{thm:k-bijection}
A hook permutation $\mathcal{H}$ of type $\mu = k^m$ is a $k$-partial involution if and only if $\sch(\mathcal{H}) = (P,P)$. That is, we have a bijection,
\[ \{ \mbox{ $k$-partial involutions } \} \xleftrightarrow{\sch} \{ \mbox{ rim-hook tableaux $P$ of content $k^m$ } \}.\]
\end{thm}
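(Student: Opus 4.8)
The plan is to obtain this as the White--Stanton--White analogue of the classical fact that ordinary Schensted insertion sends an involution $w$ to an equal pair $(P,P)$, and that the underlying reason is the transpose-symmetry $\sch(w^{-1}) = (Q,P)$ whenever $\sch(w) = (P,Q)$. First I would make precise the ``inverse'' $\mathcal{H}^{-1}$ of a hook permutation $\mathcal{H}$ of content $\mu$: reflect the underlying permutation matrix across its main diagonal, carrying each hook along with the cell it decorates. This is again a hook permutation of content $\mu$, and $\mathcal{H} \mapsto \mathcal{H}^{-1}$ is an involution on the set of hook permutations of content $\mu$. With this definition, a $k$-partial involution is \emph{exactly} a hook permutation of type $k^m$ fixed by inversion: the underlying permutation on $[m]$ is an involution, and two letters paired by it carry the same hook --- precisely the symmetry seen in the hook-block construction (via Lemma~\ref{lem:cyc}).

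The crux is then the following symmetry lemma for $\sch$: if $\sch(\mathcal{H}) = (P,Q)$, then $\sch(\mathcal{H}^{-1}) = (Q,P)$. I would establish this either by appealing to the symmetry already built into White's development in \cite{white}, or, more self-containedly, by recasting $\sch$ as a growth-diagram algorithm with local rules and observing that those rules are invariant under reflection across the main diagonal, so that transposing the input diagram transposes the output and interchanges the insertion and recording tableaux. (An induction on the number of hooks, tracking how hook-insertion and reverse hook-insertion interact --- the analogue of the matrix-ball proof of RSK symmetry --- is a third route.) I expect this lemma to be the main obstacle, since it requires working with the internals of $\sch$ rather than using it as a black box; the remainder of the argument is purely formal.

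Granting the lemma, both implications are immediate. If $\mathcal{H}$ is a $k$-partial involution and $\sch(\mathcal{H}) = (P,Q)$, then $(P,Q) = \sch(\mathcal{H}) = \sch(\mathcal{H}^{-1}) = (Q,P)$, so $P = Q$. Conversely, if $\sch(\mathcal{H}) = (P,P)$, then $\sch(\mathcal{H}^{-1}) = (P,P) = \sch(\mathcal{H})$; since $\mu = k^m$ is rectangular, Proposition~\ref{prop:no cancellation for rectangular content} says $\sch$ is a bijection, hence injective, on hook permutations of content $k^m$, so $\mathcal{H}^{-1} = \mathcal{H}$ and $\mathcal{H}$ is a $k$-partial involution.

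Finally, to extract the bijection, I would restrict the bijection $\sch$ of Proposition~\ref{prop:no cancellation for rectangular content} to the fixed-point set of $\mathcal{H} \mapsto \mathcal{H}^{-1}$ on its domain. By the two implications just proved, that fixed-point set is exactly the set of $k$-partial involutions, and its image is the fixed-point set of $(P,Q) \mapsto (Q,P)$ on pairs of rim-hook tableaux of content $k^m$, that is, the diagonal $\{(P,P)\}$, which is canonically the set of rim-hook tableaux $P$ of content $k^m$. Thus $\mathcal{H} \mapsto P$, where $\sch(\mathcal{H}) = (P,P)$, is the asserted bijection; in particular the number of $k$-partial involutions equals $\sum_\lambda |\{P : \sh(P) = \lambda,\ \mathrm{content}(P) = k^m\}|$.
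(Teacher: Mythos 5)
Your overall architecture is sound and is, in fact, essentially how this result is obtained in the literature: the paper itself offers no proof of Theorem~\ref{thm:k-bijection} (it is quoted directly from Stanton and White), and their argument does run through a symmetry theorem for \sch\ of exactly the kind you posit, combined with the bijectivity statement of Proposition~\ref{prop:no cancellation for rectangular content} to get injectivity for the converse direction. Your formal deductions from the symmetry lemma are correct, including the final identification of the fixed-point set of inversion with the diagonal pairs.

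The genuine gap is that the symmetry lemma $\sch(\mathcal{H}^{-1}) = (Q,P)$ is where all of the content lives, and the routes you sketch for it are speculative. There is no off-the-shelf growth-diagram formulation of the rim-hook Schensted algorithm with manifestly diagonal-symmetric local rules; asserting one exists is not a proof. The route that actually works for rectangular content $k^m$ is the $k$-quotient decomposition: a hook permutation of content $k^m$ splits into $k$ ordinary colored permutations, rim-hook tableaux of content $k^m$ correspond to $k$-tuples of standard tableaux via the $k$-quotient, and \sch\ factors through $k$ independent ordinary RSK correspondences, so the classical symmetry theorem applies coordinatewise. You should also pin down a convention you currently gloss over: when you reflect the permutation matrix and ``carry each hook along with the cell it decorates,'' you must verify that the hook is carried \emph{unchanged} (not transposed), since the paper's hook-block construction assigns \emph{equal} hook shapes to letters paired by the involution (via Lemma~\ref{lem:cyc}); if the correct symmetry for \sch\ instead transposed the hooks, your identification of $k$-partial involutions with the fixed points of $\mathcal{H} \mapsto \mathcal{H}^{-1}$ would fail. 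As written, the proposal is a correct reduction to an unproved and nontrivial lemma rather than a complete proof.
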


Now let \[I_k^{\lambda} = \{\mathcal{H} : \mathcal{H} \text{ is a } k\text{-partial involution and } \sch(\mathcal{H}) \text{ has shape } \lambda\}.\] Then, together with Corollary \ref{cor:MNcancelfree}, we obtain the following.
\begin{cor}\label{cor:rectangular equality}
Let $k$ and $m$ be positive integers. Then for all $\lambda \vdash n$, \[ |\chi^{\lambda}_{k^m}| = |I_k^{\lambda}|.\]
\end{cor}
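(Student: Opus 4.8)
The idea is to chain together two bijections already provided in the excerpt. On the one hand, Theorem \ref{thm:k-bijection} gives a bijection $\sch$ between $k$-partial involutions of type $k^m$ and rim-hook tableaux $P$ of content $k^m$, with the property that a $k$-partial involution $\mathcal{H}$ maps to the pair $(P,P)$ precisely when $\mathcal{H}$ is a $k$-partial involution. Restricting this bijection to those $\mathcal{H}$ whose image has shape $\lambda$, we obtain $|I_k^\lambda| = |\{P : P \text{ has shape } \lambda, \text{ content } k^m\}|$. On the other hand, Corollary \ref{cor:MNcancelfree} says that when $\mu = k^m$ is rectangular, the Murnaghan--Nakayama sum for $\chi^\lambda_{k^m}$ is cancellation-free, so $|\chi^\lambda_{k^m}| = |\{P : P \text{ has shape } \lambda, \text{ content } k^m\}|$. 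Equating the two right-hand sides gives $|\chi^\lambda_{k^m}| = |I_k^\lambda|$, as desired.

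So the proof is essentially a two-line deduction: first invoke Theorem \ref{thm:k-bijection} with the shape constraint to evaluate $|I_k^\lambda|$ as a count of rim-hook tableaux of shape $\lambda$ and content $k^m$; then invoke Corollary \ref{cor:MNcancelfree} to identify that same count with $|\chi^\lambda_{k^m}|$. The one point that deserves an explicit remark is that the shape restriction is compatible with the bijection of Theorem \ref{thm:k-bijection}: since $\sch(\mathcal{H}) = (P,P)$, the shape of $P$ is exactly the common shape of the pair, which is the statistic $\mathcal{H} \mapsto \sh(\sch(\mathcal{H}))$ used in the definition of $I_k^\lambda$. Thus restricting the domain to $I_k^\lambda$ corresponds exactly to restricting the codomain to rim-hook tableaux of shape $\lambda$, and the restricted map remains a bijection.

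I do not anticipate a genuine obstacle here, since both halves are quoted results; the only care needed is bookkeeping — making sure "content $k^m$", "shape $\lambda$", and "$n = km$" line up across the two statements, and noting that $|\chi^\lambda_{k^m}|$ (an absolute value) matches the unsigned count of tableaux because the cancellation-free property of Corollary \ref{cor:MNcancelfree} tells us all tableaux of a given shape and rectangular content share a sign, so $\chi^\lambda_{k^m} = \pm |\{P\}|$. Putting these together yields the displayed equality.
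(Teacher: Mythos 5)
Your proposal is correct and matches the paper's (implicit) argument exactly: the corollary is stated as an immediate consequence of combining Theorem \ref{thm:k-bijection} with Corollary \ref{cor:MNcancelfree}, which is precisely the two-step deduction you give. Your extra remark about the shape restriction being compatible with the bijection is a reasonable bit of bookkeeping that the paper leaves unstated.
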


Because $N(\mu)$ counts all hook-block involutions of content $\mu$, Theorem \ref{thm:formula} implies the following result.
\begin{cor}\label{cor:rectangular sum}
Let $k$ and $m$ be positive integers. Then, \[ \sum_{\lambda \vdash km} |\chi^\lambda_{k^m}| = N(k^m) = \sum_{0\leq i \leq m/2} \frac{k^{m-i} m!}{2^i i! (m-2i)!}.\] 
\end{cor}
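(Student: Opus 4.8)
The plan is to chain together equalities that are already in hand. By Corollary~\ref{cor:rectangular equality}, for each fixed $\lambda \vdash km$ we have $|\chi^\lambda_{k^m}| = |I_k^\lambda|$. Summing over all partitions $\lambda \vdash km$, and noting that the sets $I_k^\lambda$ are pairwise disjoint (each $k$-partial involution $\mathcal{H}$ has a single well-defined shape $\sh(\sch(\mathcal{H}))$, by Theorem~\ref{thm:k-bijection}), this yields
\[ \sum_{\lambda \vdash km} |\chi^\lambda_{k^m}| = \sum_{\lambda \vdash km} |I_k^\lambda| = \bigl|\{\, k\text{-partial involutions of type } k^m \,\}\bigr|. \]
Thus everything reduces to (a) identifying the number of $k$-partial involutions of type $k^m$ with $N(k^m)$, and then (b) substituting the closed form for $N(k^m)$.

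For (a), fix a permutation $\rho$ of cycle type $k^m$ and use the hook-block involution construction. It assigns to each involution product graph for $\rho$ a hook permutation of content $k^m$ in which every hook has size $k$ — which is, by definition, a $k$-partial involution. I would verify this assignment is a bijection onto the set of $k$-partial involutions of type $k^m$. By Corollary~\ref{cor:match}, every connected component of an involution product graph for $\rho$ is either a single $k$-cycle or a pair of interlaced $k$-cycles, so the graph is determined by: (i) which $k$-cycles of $\rho$ are interlaced with which, i.e. an involution on the $m$ cycles, together with (ii) for each component, the choice of the vertex $v_1$ of Lemma~\ref{lem:cyc}, which by Corollary~\ref{cor:num} has exactly $k$ options. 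But this is precisely the data recorded by a $k$-partial involution: the pairing of the $m$ equal-size hooks, together with, for each hook, the height of its first column (one of $k$ values, per the recipe in the hook-block construction). The construction is manifestly reversible from this data, so we get the claimed bijection, and hence the number of $k$-partial involutions of type $k^m$ equals $N(k^m)$. (Alternatively, one may simply cite the remark preceding the statement, that $N(\mu)$ counts hook-block involutions of content $\mu$, specialized to the rectangular case $\mu = k^m$, where hook-block involutions are exactly $k$-partial involutions.)

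For (b), Proposition~\ref{prp:rec} (a special case of Theorem~\ref{thm:formula}) evaluates $N(k^m) = R_m(k) = \sum_{0 \le i \le m/2} \frac{k^{m-i} m!}{2^i i! (m-2i)!}$, which completes the proof.

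The main obstacle, such as it is, is the bijectivity in step (a): checking that the two pieces of data attached to an involution product graph for $\rho$ (the interlacing pattern, and the choice of $v_1$ in each component) correspond exactly to the two pieces of data defining a $k$-partial involution (the pairing of hooks, and the first-column heights), with nothing double-counted and nothing omitted. Everything else is routine: the disjointness of the $I_k^\lambda$ over distinct $\lambda$, and the substitution of the closed form from Proposition~\ref{prp:rec}. One should also note that the content $k^m$ is forced — all $m$ cycles of $\rho$ have size $k$, and by Corollary~\ref{cor:match} interlacing never pairs cycles of different sizes, so every hook in the associated hook permutation has size exactly $k$.
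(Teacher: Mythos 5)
Your proposal is correct and follows the same route the paper takes: sum Corollary~\ref{cor:rectangular equality} over $\lambda$, identify the disjoint union of the $I_k^\lambda$ with the set of hook-block involutions of content $k^m$ counted by $N(k^m)$, and substitute the closed form from Proposition~\ref{prp:rec} (equivalently Theorem~\ref{thm:formula}). The paper dispatches all of this in one sentence; you have merely spelled out the bijectivity of the hook-block construction that the paper takes as given.
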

This leads one to wonder whether something similar is true for non-rectangular $\mu$.

\begin{ques}\label{ques:1}
For which $\mu \vdash n$ is it true that
\begin{equation}\label{eq:q1}
 \sum_{\lambda \vdash n} |\chi^\lambda_\mu| = N(\mu)?
\end{equation}
\end{ques}

Corollary \ref{cor:rectangular equality} shows that Equation \eqref{eq:q1} holds for $\mu =k^m$  and by computer we have checked that it holds for all $\mu\vdash n$ when $n \leq 7$. However, when $n=8$, we begin to see some discrepancies. In each of these cases $N(\mu)$ is greater than $\sum_{\lambda \vdash n} |\chi^\lambda_\mu|$. See Table \ref{tab:diff}, where for $n\leq 10$ we have listed all the shapes $\mu\vdash n$ for which Equation~\eqref{eq:q1} does not hold.

\begin{table}
\subtable{
\begin{tabular}{c | c}
$\mu$ & $N(\mu) - \sum_{\lambda \vdash n} |\chi^\lambda_\mu|$\\
\hline
\hline
$2^2 4$ & 4\\
$1^2 2 4$ & 4 \\
$1^4 4$ & 4\\
$1^2 2^3$ & 4 \\
$1^4 2^2$ & 8\\
$1^6 2$ & 4\\
\hline
$234$ & 4 \\
$1^234$ & 4\\
$1 2^2 4$ & 4\\
$1^3 24$ & 4\\
$1^5 4$ & 4\\
$1^3 3^2$ & 8\\
$2^3 3$ & 4\\
$1^22^2 3$ & 4\\
$1^4 23$ & 4\\
$1^6 3$ & 20\\
$1^3 2^3$ & 4\\
$1^5 2^2$ & 8\\
$1^7 2$ & 4\\ 
\end{tabular}
}
\hspace{.5cm}
\subtable{
\begin{tabular}{c | c }
$\mu$ & $N(\mu) - \sum_{\lambda \vdash n} |\chi^\lambda_\mu|$\\
\hline
\hline
$235$ & 4\\
$1^2 3 5$ & 4 \\
$1^3 25$ & 4\\
$1^5 5$ & 8 \\
$2^3 4$ & 8\\
$1^2 2^2 4$ & 8\\
$1^424$ & 8 \\
$1^64$ & 8\\
$2^2 3^2$ & 8\\
$1^4 3^2$ & 8\\
$1^3 2^2 3$ & 8\\
$1^5 23$ & 16\\
$1^7 3$ & 56\\
$1^22^4$ & 8\\
$1^4 2^3$ & 28\\
$1^6 2^2$ & 48\\
$1^8 2$ & 32\\
\end{tabular}
}
\caption{When Equation \eqref{eq:q1} does not hold.}\label{tab:diff}
\end{table}

If we check all shapes $\mu \vdash n$ with $n\leq 16$, we find that there are indeed a number of cases in which Equation~\eqref{eq:q1} holds. We summarize the number of agreements ($N(\mu) = \sum_{\lambda \vdash n} |\chi^\lambda_\mu|$) and discrepancies ($N(\mu) > \sum_{\lambda \vdash n} |\chi^\lambda_\mu|$) in Table \ref{tab:numdiff}. We have also included the number of rectangular partitions $\mu = k^m$ since in these cases we know that equality holds. For example, when $n=16$ there are 64 shapes $\mu \vdash 16$ for which $N(\mu) = \sum_{\lambda \vdash n} |\chi^\lambda_\mu|$, yet we can only explain 5 of these: $1^{16}, 2^8, 4^4, 8^2,$ and $16^1$.

\begin{table}
\begin{tabular}{ c | c c c c c c c c c c} 
$n$ & $\leq 7$ &8 & 9 & 10 & 11 & 12 & 13 & 14 & 15 & 16\\
\hline
\hline
Discrepancies: & 0 & 6 & 13 & 17 & 24 & 46 & 60 & 83 & 114 & 167\\
Agreements: & all &16 & 17 & 25 & 32 & 31 & 41 & 52 & 62 & 64\\
Known agreements $(\mu=k^m)$: & - & 4 & 3 & 4 & 2 & 6 & 2 & 4 & 4 & 5\\
\hline
Partitions of $n$: & - & 22 & 30 & 42 & 56 & 77 & 101 & 135 & 176 & 231
\end{tabular}
\vspace{.5cm}
\caption{Counting the number of shapes $\mu$ for which $N(\mu) = \sum_{\lambda \vdash n} |\chi^\lambda_\mu|$.}\label{tab:numdiff}
\end{table}

If we are to move beyond the rectangular case, we need to understand how to associate involution products, or hook-block involutions, to a collection of rim-hook tableaux.  Theorem \ref{thm:k-bijection} does this in the case when $\mu$ is rectangular, and Theorem~\ref{thm:good involutions} shows that \sch\ does something similar in the general case. See Figure \ref{fig:schematic} for an illustration of these results and their scopes.

\begin{thm}\label{thm:good involutions}
A hook permutation $\mathcal{H}$ of content $\mu$ is a hook-block involution if and only if $\sch(\mathcal{H}) = (P,P)$.  That is, the non-cancelling pairs $(P,P)$ of content $\mu$ are in bijection with the set of hook-block involutions.
\end{thm}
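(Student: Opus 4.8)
The plan is to prove Theorem~\ref{thm:good involutions} by reducing it to the rectangular case already handled in Theorem~\ref{thm:k-bijection}, exploiting the fact that \sch\ is compatible with the decomposition of a hook permutation into its blocks of equal hook-size. The key structural input is Corollary~\ref{cor:match}: in any involution product graph, cycles of different sizes lie in different connected components, and dually, in any hook-block involution, hooks of different sizes never pair with one another under the underlying involution. So a hook-block involution $\mathcal{H}$ of content $\mu = 1^{j_1}2^{j_2}\cdots$ decomposes uniquely as a ``concatenation'' of a $j_1$-partial involution on $1$-hooks, a $j_2$-partial involution on $2$-hooks, and so on, with the hooks arranged in our prescribed order (largest hook-size first, and within a block ordered by smallest letter). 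The first step is to make this decomposition precise and to observe that it matches exactly the block structure White's \sch\ respects: because \sch\ inserts hooks one at a time and the shape of $\lambda$ built from $i$-hooks is confined to a ``vertical strip region'' determined by the earlier blocks, the output pair $(P,Q)$ decomposes blockwise as $(P^{(1)},Q^{(1)}), (P^{(2)},Q^{(2)}),\ldots$ where $(P^{(i)},Q^{(i)})$ is the \sch-image of the $i$-hook sub-permutation of $\mathcal{H}$.

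The second step is to run the rectangular result on each block. Given that block decomposition, $\sch(\mathcal{H}) = (P,P)$ holds if and only if $P^{(i)} = Q^{(i)}$ for every $i$ with $j_i > 0$; and by Theorem~\ref{thm:k-bijection} applied with $k=i$, $m=j_i$, this happens if and only if the $i$-hook sub-permutation is an $i$-partial involution. Finally, by the definition of hook-block involution (and again Corollary~\ref{cor:match}), $\mathcal{H}$ is a hook-block involution precisely when each of its equal-hook-size sub-permutations is a partial involution of the appropriate order. Chaining these equivalences gives the theorem. The last sentence of the statement (the bijection with non-cancelling pairs) is then a formality: White's \sch\ sends positive pairs of content $\mu$ to either negative pairs or hook permutations, and a pair is fixed as $(P,P)$ exactly when it is non-cancelling, so the hook permutations in the image of the fixed-point locus are exactly the hook-block involutions.

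The \textbf{main obstacle} is the first step: verifying that \sch\ genuinely respects the block decomposition, i.e.\ that inserting the $i$-hooks does not interact with the previously-inserted larger hooks in a way that spoils the factorization $\sch = \prod_i \sch_i$. This is really a statement about White's construction --- one must check that when the content is processed in decreasing hook-size order, the regions of the tableau occupied by hooks of a given size are determined (as skew shapes) by the shape assembled from the larger hooks, so that the insertion of the $i$-block is an instance of the rectangular algorithm on $i^{j_i}$ played inside that skew region, and that White's map commutes with this restriction. I would quote the relevant structural lemmas from \cite{white} and \cite{stanton-white} rather than reprove them; the content of our argument is the combinatorial observation, via Corollary~\ref{cor:match}, that hook-block involutions are \emph{exactly} the hook permutations that are ``blockwise partial involutions,'' which is what makes Theorem~\ref{thm:k-bijection} assemble into Theorem~\ref{thm:good involutions}. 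A minor subtlety to address is bookkeeping of signs: one should note that within a single $i$-block the sub-pair is automatically positive (Proposition~\ref{prop:no cancellation for rectangular content}), so the only way $(P,Q)$ can be non-cancelling is the blockwise equality condition above, with no cross-block sign cancellation possible.
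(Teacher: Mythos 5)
Your overall strategy coincides with the paper's: reduce to the rectangular case of Theorem~\ref{thm:k-bijection} block by block, using the fact that a hook-block involution is exactly a hook permutation whose equal-length blocks are each partial involutions, with the sign bookkeeping handled by Proposition~\ref{prop:no cancellation for rectangular content}. The one place where your plan is not yet a proof is precisely the step you flag as the main obstacle: the claim that \sch\ factors over the blocks, i.e.\ that inserting the length-$\ell$ hooks is an instance of the rectangular algorithm undisturbed by the larger hooks already placed. You propose to discharge this by quoting structural lemmas from \cite{white} and \cite{stanton-white}, but the paper does not invoke any off-the-shelf factorization statement of that kind, and you should not assume one exists in the form you need; this is the part of the argument that actually has to be supplied. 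The paper's device is an induction on the number of distinct part sizes together with a padding trick: having built the shape $\alpha$ from all hooks of length greater than $\ell$, append $\ell$ sufficiently long rows on top of $\alpha$ (each of length at least $|\alpha|+\ell j_\ell$, with the total size of the resulting shape $\beta$ a multiple of $\ell$). The insertion of the new $\ell$-hooks is unaffected by this padding, and their placement cannot disturb the labels already present since their content exceeds everything in $\beta$; one may therefore pretend $\beta$ was filled entirely by $\ell$-hooks of smaller content than those about to be inserted. Stanton and White's rectangular theorem then forces all of these $\ell$-hooks --- imagined and real --- to form an $\ell$-partial involution, and since the imagined ones already form one amongst themselves, so must the real ones. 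Supplying this padding argument (or an equivalent justification of your asserted factorization $\sch = \prod_i \sch_i$) is what is needed to turn your outline into the paper's proof.
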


\begin{figure}
\begin{tikzpicture}
\draw[very thick, rounded corners] (0,0) rectangle (5,7);
\draw (2.5,5) node {Positive pairs $(P,Q)$};
\draw[very thick, rounded corners] (10,0) rectangle (15,7);
\draw (12.5,6) node {Negative pairs $(P',Q')$};
\draw[thick] (10,5)--(15,5);
\draw (12.5,4) node {Hook permutations};
\draw[thick, rounded corners] (1,.5) rectangle (4,3);
\draw (2.5,2.25) node {Non-cancelling};
\draw (2.5,1.75) node {Diagonal pairs};
\draw (2.5,1.25) node {$(P,P)$};
\draw[thick, rounded corners] (11,.5) rectangle (14,3);
\draw (12.5,2) node {Hook-block};
\draw (12.5,1.5) node {Involutions};
\draw[thick, <->] (5.1,5) -- node[above] {\sch} (9.9,5);
\fill[white] (4.5,1.9) rectangle (10.5,2.1);
\draw[thick, <->] (4.1,2) -- node[above] {Restriction of \sch} node[below] {(Theorem \ref{thm:good involutions})} (10.9,2);
\end{tikzpicture}
\caption{Schematic diagram for the \sch\ bijection with fixed shape $\lambda$ and content $\mu$.}\label{fig:schematic}
\end{figure}
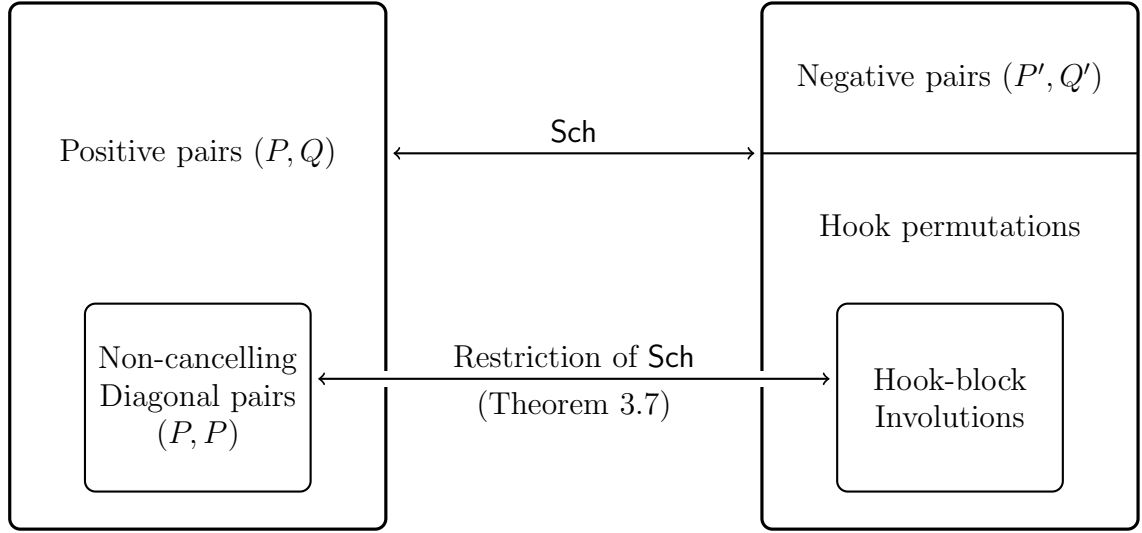

We outline the proof of this theorem, making references to the appropriate work of Stanton and White.

\begin{proof}[Proof of Theorem~\ref{thm:good involutions}]
The result is proved by induction on the number of different part sizes in the content $\mu$.  When there is only one part size, the content is rectangular, and this case is covered by Stanton and White.

Suppose, inductively, that the shape $\alpha$ has been constructed by inserting all hooks of length greater than $\ell$.  We observe that when the insertion algorithm \sch\ is inserting hooks of length $\ell$, the process is unaffected by appending $\ell$ sufficiently long rows, each of length at least $|\alpha| + \ell j_{\ell}$, on top of $\alpha$.  We now do this, and further require that the size of the resulting shape $\beta$ is a multiple of $\ell$.

The positioning of these $\ell$-hooks into the shape $\beta$ does not affect the content of the previously constructed shape, because the content of each of these $\ell$-hooks is greater than any content appearing in $\beta$.  Thus, we can ignore the labels in the already constructed shape.  Moreover, if we pretend that $\beta$ had been filled entirely by $\ell$-hooks of smaller content than any of the $\ell$-hooks we are about to insert, we see by Stanton and White's result that all of these $\ell$-hooks, both those we are ``imagining'' and those we are inserting next, must form a $\ell$-partial involution.  And finally, since those ``imagined'' $\ell$-hooks already inserted into $\beta$ must be an $\ell$-partial involution amongst themselves, these new $\ell$-hooks that we are about to insert must form an $\ell$-partial involution amongst themselves as well.
\end{proof}

Theorem \ref{thm:good involutions} shows it is possible to associate a collection of $N(\mu)$ rim-hook tableaux of content $\mu$ to the set of all hook-block involutions of content $\mu$. Unfortunately, it is not obvious how to deduce when $\sum_{\lambda\vdash n} |\chi^{\lambda}_{\mu}|$ and $N(\mu)$ coincide from this result. Moreover, as the following example shows, even when $N(\mu)$ and $\sum_{\lambda\vdash n} |\chi^{\lambda}_{\mu}|$ are equal, applying \sch\ to the set of hook-block involutions does not necessarily yield a result like Corollary \ref{cor:rectangular equality}. 

\begin{example}\label{ex:nonrec} While $N(2211) = 12$, applying \sch\ to the corresponding twelve hook-block involutions yields no tableaux of shape 6 or shape 51, while $\chi^6_{2211}= \chi_{2211}^{51} = 1$. Compare Figures \ref{fig:hooks2} and \ref{fig:tabl2} with the column corresponding to 2211 in Figure \ref{fig:S6}.
\end{example}

\begin{figure}[h]
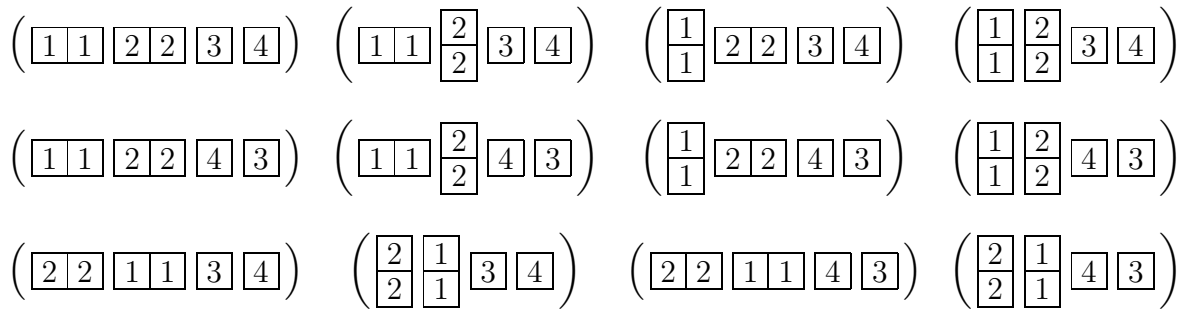

\[\begin{array}{c c c c}
\Big( \, \young(11) \,\, \young(22) \,\, \young(3) \,\, \young(4) \, \Big) & \left( \, \young(11) \,\, \young(2,2) \,\, \young(3) \,\, \young(4) \, \right) & \left(\, \young(1,1) \,\, \young(22) \,\, \young(3) \,\, \young(4) \, \right) & \left( \, \young(1,1) \,\, \young(2,2) \,\, \young(3) \,\, \young(4) \, \right) \\
&&&\\
\Big( \, \young(11) \,\, \young(22) \,\, \young(4) \,\, \young(3) \, \Big) & \left( \, \young(11) \,\, \young(2,2) \,\, \young(4) \,\, \young(3) \, \right) & \left(\, \young(1,1) \,\, \young(22) \,\, \young(4) \,\, \young(3) \,\right) & \left(\, \young(1,1) \,\, \young(2,2) \,\, \young(4) \,\, \young(3) \,\right) \\ &&&\\
\Big( \, \young(22) \,\, \young(11) \,\, \young(3) \,\, \young(4) \,\Big) & \left( \, \young(2,2) \,\, \young(1,1) \,\, \young(3) \,\, \young(4) \,\right) & \Big(\, \young(22) \,\, \young(11) \,\, \young(4) \,\, \young(3) \,\Big) & \left(\, \young(2,2) \,\, \young(1,1) \,\, \young(4) \,\, \young(3) \,\right)
\end{array}\]
\caption{The hook-block involutions corresponding to the involution product graphs for $\sigma = (12)(34)(5)(6)$.}\label{fig:hooks2}
\end{figure}

\begin{figure}[h]
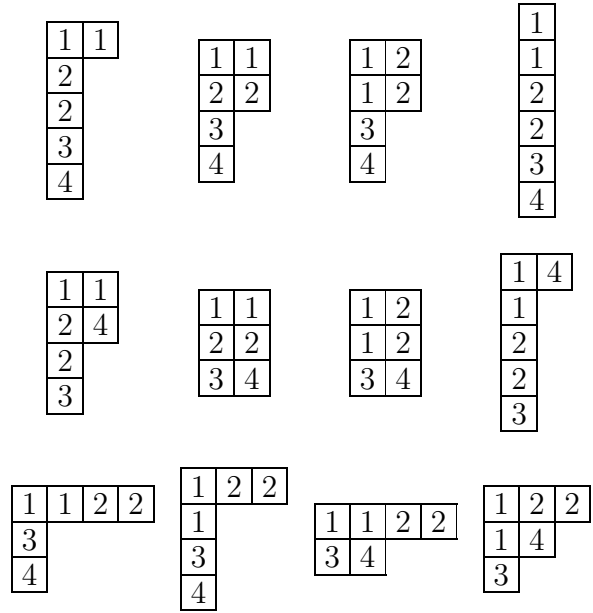

\[\begin{array}{ c c c c }
\young(11,2,2,3,4) & \young(11,22,3,4) & \young(12,12,3,4) & \young(1,1,2,2,3,4)\\
&&&\\
\young(11,24,2,3) & \young(11,22,34) & \young(12,12,34) & \young(14,1,2,2,3) \\
&&&\\
\young(1122,3,4) & \young(122,1,3,4) & \young(1122,34) & \young(122,14,3)
\end{array}
\]
\caption{The image under \sch\ of the hook-block involutions of content $(2,2,1,1)$.}\label{fig:tabl2}
\end{figure}

\section*{Acknowledgments}
The authors wish to thank Allan Berele, Ira Gessel, and John Stembridge for conversations that helped prompt us to better understand characters of $S_n$. Thanks to Dennis White for helping us understand \sch, and thanks to Peter Winkler for helpful discussions related to the formula for $N(\mu)$. Lastly, we should mention the role played by OEIS, without which we would never have guessed at a connection between involution products and characters of the symmetric group.

\end{document}